\journalname{Communications in Mathematical Physics}
\newcolumntype{M}[1]{>{$}{#1}<{$}}
\newcolumntype{C}[1]{>{\centering}m{#1}}
\newcolumntype{B}[1]{>{\mathbf\bgroup}{#1}<{\egroup}}
\newcolumntype{K}{>{\lvert}{c}<{\rangle}}
\numberwithin{equation}{section}
\DeclareMathOperator{\Aut}{Aut}
\DeclareMathOperator{\Str}{Str}
\DeclareMathOperator{\Tr}{Tr} % for partial traces only
\DeclareMathOperator{\Det}{Det}
\DeclareMathOperator{\diag}{diag}
\DeclareMathOperator{\Iso}{Iso}
\DeclareMathOperator{\Hom}{Hom}
\DeclareMathOperator{\SO}{SO}
\DeclareMathOperator{\SL}{SL}
\DeclareMathOperator{\SU}{SU}
\DeclareMathOperator{\Sp}{Sp}
\newcommand{\pmtwo}[4]{\begin{pmatrix}#1 & #2 \\ #3 & #4 \end{pmatrix}}
\newcommand{\be}{\begin{equation}}
\newcommand{\ee}{\end{equation}}
\newcommand{\bea}{\begin{eqnarray}}
\newcommand{\eea}{\end{eqnarray}}
\newcommand{\half}{\tfrac{1}{2}}
\newcommand{\J}{\mathfrak{J}}
\newcommand{\JA}{\mathfrak{J}^{\mathds{A}}_{3}}
\newcommand{\JAs}{\mathfrak{J}^{\mathds{A}^s}_{3}}
\newcommand{\alg}{\mathds{A}}
\newcommand{\F}{\mathds{F}}
\newcommand{\R}{\mathds{R}}
\newcommand{\C}{\mathds{C}}
\newcommand{\Q}{\mathds{H}}
\newcommand{\Z}{\mathds{Z}}
\newcommand{\Oct}{\mathds{O}}
\newcommand{\FTS}{\mathfrak{F}}
\newcommand{\Fnt}{\mathfrak{F}^{2,n}}
\newcommand{\Fnf}{\mathfrak{F}^{6,n}}
\newcommand{\Rstr}[1]{\mathfrak{Str}_0(#1)}
\newcommand{\Jn}{\mathfrak{J}_{1,n-1}}
\newcommand{\Jnt}{\mathfrak{J}_{1,n-1}}
\newcommand{\Jnf}{\mathfrak{J}_{5,n-1}}
\begin{document}

%\begin{titlepage}%1
%\begin{center}
%\hfill Imperial/TP/2011/mjd/3\\
%\hfill CERN-PH-TH/2011-192 \\

%\vskip 1.5cm

%{\huge \bf Explicit Orbit Classification of Reducible Jordan Algebras and Freudenthal Triple Systems}

%\vskip 1.5cm

%{\bf L.~Borsten\,$^1$, M.~J.~Duff\,$^2$, S.~Ferrara\,$^{3,4,5}$,
%A.~Marrani\,$^3$ and W.~Rubens\,$^2$}

%\vskip 20pt

%{\it ${}^1$ INFN Sezione di Torino \&  Dipartimento di Fisica Teorica,
%Universit\`a di Torino \\
%     Via Pietro Giuria 1, 10125 Torino, Italy}\\\vskip 5pt
%     \texttt{borsten@to.infn.it}

%    \vspace{10pt}

%{\it $^2$ Theoretical Physics, Blackett Laboratory, Imperial College London,\\
% London SW7 2AZ, United Kingdom}\\\vskip 5pt
%\texttt{m.duff@imperial.ac.uk}\\
%\texttt{w.rubens@imperial.ac.uk}

%     \vspace{10pt}

%{\it ${}^3$ Physics Department, Theory Unit, CERN,\\
     %CH -1211, Geneva 23, Switzerland}\\\vskip 5pt
     %\texttt{sergio.ferrara@cern.ch}\\
    % \texttt{alessio.marrani@cern.ch}

   %  \vspace{10pt}

  %   {\it ${}^4$ INFN - Laboratori Nazionali di Frascati,\\
 %    Via Enrico Fermi 40, I-00044 Frascati, Italy}

%     \vspace{10pt}

%     {\it ${}^5$  Department of Physics and Astronomy,\\
%University of California, Los Angeles, CA 90095-1547,USA}

%\end{center}
%\today

%\end{center}

%\noindent

%\vfill

%\July 2008

%\end{titlepage}

%\newpage \setcounter{page}{1} \numberwithin{equation}{section}
%\pagestyle{plain}
%\tableofcontents

%\newpage

%\vskip 2.2cm

\title{{Explicit Orbit Classification of Reducible Jordan Algebras and Freudenthal Triple Systems}}
\titlerunning{Explicit Orbit Classification of Reducible Jordan Algebras and Freudenthal Triple Systems}

\author{L Borsten \inst{1}\and MJ Duff\inst{2} \and S Ferrara\inst{3}\fnmsep\inst{4}\fnmsep\inst{5}\and A Marrani\inst{3}\and W Rubens\inst{2}}
\institute{INFN Sezione di Torino \&  Dipartimento di Fisica Teorica,
Universit\`a di Torino \\
Via Pietro Giuria 1, 10125 Torino, Italy\\ \email{borsten@to.infn.it}
\and
Theoretical Physics, Blackett Laboratory, Imperial College London,\\
London SW7 2AZ, United Kingdom \\ \email{m.duff@imperial.ac.uk; w.rubens@imperial.ac.uk}
\and
Physics Department, Theory Unit, CERN,\\
CH -1211, Geneva 23, Switzerland \\ \email{sergio.ferrara@cern.ch; alessio.marrani@cern.ch}
\and
INFN - Laboratori Nazionali di Frascati,\\
Via Enrico Fermi 40, I-00044 Frascati, Italy
\and
Department of Physics and Astronomy,\\
University of California, Los Angeles, CA 90095-1547,USA}

\authorrunning{L Borsten \emph{et al}}

\date{\today}

\maketitle
\begin{abstract}
We determine explicit orbit representatives of \textit{reducible}
Jordan algebras and  their corresponding Freudenthal triple
systems. This work has direct application to the classification of
extremal black hole solutions of $\mathcal{N}=2$, $4$ locally
supersymmetric theories of gravity coupled to an arbitrary number of
Abelian vector multiplets in $D=4$, $5$ space-time dimensions.

\end{abstract}

\tableofcontents

\section{Introduction}

The present investigation is devoted to the study of the explicit
representatives of the orbits of \textit{reducible} cubic Jordan
algebras, and of their corresponding Freudenthal triple systems (FTS). 
This is in the spirit of previous analyses by Shukuzawa
\cite{Shukuzawa:2006}, which in turn was inspired \textit{e.g.} by
Jacobson \cite{Jacobson:1961} and Krutelevich \cite
{Krutelevich:2002,Krutelevich:2003ths,Krutelevich:2004}. By reducible we mean here that the cubic norm of the underlying Jordan algebra is a factorisable homogeneous polynomial of degree 3, as opposed to the  irreducible, \emph{i.e. non-factorisable}, cases treated in the previous works \cite{Shukuzawa:2006,Krutelevich:2004}. In a
companion paper \cite{ICL-2}, the results of the present analysis
have been used to classify extremal black hole solutions in locally
supersymmetric theories of gravity with $\mathcal{N}=2$ or $4$
supercharges in $D=4$ and $5$ space-time dimensions, coupled to an
arbitrary number of (Abelian) vector multiplets. This paper aims at
completing and refining previous investigations \cite
{Ferrara:1997uz,Ferrara:2006xx,Bellucci:2006xz,Ferrara:2007tu,Cerchiai:2009pi,Ceresole:2010nm,Cerchiai:2010xv,Andrianopoli:2010bj,Borsten:2010aa}%
, and it also provides an alternative approach with respect to the
analysis based on nilpotent orbits of symmetry groups characterising
the $D=3$ time-like reduced gravity theories \cite
{Breitenlohner:1987dg,Gunaydin:2000xr,Gunaydin:2005zz,Gunaydin:2005gd,Pioline:2005vi,Gunaydin:2005mx,Gunaydin:2007bg,Gunaydin:2007qq, Bossard:2009at,Bossard:2009mz,Fre:2011aa}.\medskip

The paper is organised as follows. In \autoref{sec:J} the cubic Jordan algebras construction related to $D=5$ supergravity \cite{Gunaydin:1983bi,Gunaydin:1983rk,Gunaydin:1984ak,Ferrara:2006xx} is summarised. After some
introductory background in  \autoref{sec:J-constr}  the orbits of the  \textit{reducible}
Lorentzian spin factors (then generalised to an arbitrary
pseudo-Euclidean signature) are derived in Sec. \autoref{sec:redJ}. The peculiar cases of the so-called $\mathcal{N}=2$ $%
STU$, $ST^{2}$ and $T^{3}$ supergravity models in $D=5$ are
considered in  \autoref{sec:d5special}.

Then,  \autoref{sec:FTS} studies the Freudenthal triple systems.
These are defined both axiomatically and in relation to possibly
underlying cubic
Jordan algebras, respectively in \autoref{sec:FTS-axio} and \autoref{sec:FTS-J}. Their automorphism group is recalled in \autoref{sec:FTS-auto}.  This is followed in 
\autoref{sec:Spin-Factors} by our main result, presented in \autoref{thm:redcanforms}, classifying the orbit
representatives of Freudenthal systems associated to
\textit{reducible} spin factors, including both the ``large'' rank $4$ orbits and the ``small'' orbits which split into three sub-classes, ranging
from rank $3$ to $1$. Physically, the small orbits correspond to black holes which classically exhibit naked singularities. As for the Jordan algebra analysis worked out in  \autoref{sec:J}, the $STU$, $ST^{2}$ and $T^{3}$ models
require a separate treatment, which is given in the concluding \autoref {sec:d4special}. Our main results in this case are presented in \autoref{thm:st2canform}  and \autoref{thm:T4}. In particular, it is shown that, since the $T^{3}$ Jordan algebra has only one non-trivial rank,  the rank 2 orbit of the FTS is not present in this case.

\section{Orbits of Reducible Cubic Jordan Algebras}\label{sec:J}

\subsection{Construction}\label{sec:J-constr}
A Jordan algebra $\mathfrak{J}$ is vector space defined over a
ground field $\mathds{F}$ (not of characteristic 2) equipped with a bilinear product
satisfying \cite{Jordan:1933a, Jordan:1933b, Jordan:1933vh}
\begin{equation}\label{eq:Jid}
A\circ B =B\circ A,\quad A^2\circ (A\circ B)=A\circ (A^2\circ B), \quad\forall\ A, B \in \mathfrak{J}.
\end{equation}
However, the 5-dimensional supergravities
\cite{Gunaydin:1984ak,Gunaydin:1983bi,Gunaydin:1983rk} are
characterised specifically by the class of \emph{cubic} Jordan
algebras, developed in \cite{Springer:1962,McCrimmon:1969}. We
sketch their construction here, following the presentation of
\cite{McCrimmon:2004}.

\begin{definition}[Cubic norm] A \emph{cubic norm} is a homogeneous map of degree three
\begin{equation}\label{eq:cubicnorm}
N:V\to \mathds{F}, \quad\text{s.t.} \quad N(\alpha A)=\alpha^3N(A), \quad \forall \alpha \in \mathds{F}, A\in V
\end{equation}
such that its linearization,
\begin{equation}
N(A, B, C):=\frac{1}{6}(N(A+ B+ C)-N(A+B)-N(A+ C)-N(B+
C)+N(A)+N(B)+N(C))
\end{equation}
is trilinear.
\end{definition}
Let $V$ be a vector space equipped with a cubic norm. If $V$ further
contains a base point $N(c)=1, c\in V$  one may define the following
four maps:
\begin{subequations}\label{eq:cubicdefs}
\begin{enumerate}
\item The trace,
    \begin{equation}
    \Tr(A)=3N(c, c, A),
    \end{equation}
\item A quadratic map,
    \begin{equation}
    S(A)=3N(A, A, c),
    \end{equation}
\item A bilinear map,
    \begin{equation}
    S(A, B)=6N(A, B, c),
    \end{equation}
\item A trace bilinear form,
    \begin{equation}\label{eq:tracebilinearform}
    \Tr(A, B)=\Tr(A)\Tr(B)-S(A, B).
    \end{equation}
\end{enumerate}
\end{subequations}
A cubic Jordan algebra $\mathfrak{J}$ with multiplicative identity $\mathds{1}=c$ may be derived from any such vector space if $N$ is \emph{Jordan cubic}.
\begin{definition}[Jordan cubic norm] A cubic norm is \emph{Jordan} if
\begin{enumerate}
\item The trace bilinear form \eqref{eq:tracebilinearform} is non-degenerate.
\item The quadratic adjoint map, $\sharp\colon\mathfrak{J}\to\mathfrak{J}$, uniquely defined by $\Tr(A^\sharp, B) = 3N(A, A, B)$, satisfies
    \begin{equation}\label{eq:Jcubic}
    (A^{\sharp})^\sharp=N(A)A, \qquad \forall A\in \mathfrak{J}.
    \end{equation}
\end{enumerate}
\end{definition}
The Jordan product is given by
\begin{equation}\label{eq:J3prod}
A\circ B := \half\big(A\times B+\Tr(A)B+\Tr(B)A-S(A,
B)\mathds{1}\big),
\end{equation}
where,
\begin{equation}\label{eq:FreuProduct}
A\times B := (A+B)^\sharp-A^\sharp-B^\sharp.
\end{equation}
Finally, the Jordan triple product is defined as
\begin{equation}\label{eq:Jtripleproduct}
\{A,B,C\}:=(A\circ B)\circ C + A\circ (B\circ C)-(A\circ C)\circ B.
\end{equation}
\begin{definition}[Irreducible idempotent] An element $E\in\mathfrak{J}$  is an \emph{irreducible idempotent} if
\be
E\circ E=E,\qquad \Tr(E)=1.
\ee
\end{definition}

%\subsection{Symmetries\label{sec:Jsym}}

There are many good references on the symmetries associated with
Jordan algebras and, in particular, on the exceptional Lie groups, see for example
\cite{Schafer:1966,Jacobson:1971,Ramond:1976aw,Springer:2000} and in
particular \cite{Krutelevich:2004, Shukuzawa:2006,Yokota:2009}. In the following we  restrict our attention to the case $\F=\R$.

  \begin{definition}[(Reduced) Structure group $\Str(\mathfrak{J})$] Invertible $\mathds{R}$-linear transformations $\tau$ preserving the cubic norm up to a fixed scalar factor,
   \be
\Str(\J):=\{\tau\in \Iso_{\R}(\J)| N(\tau A)=\lambda N(A), \lambda\in\R\}.
\ee
   The reduced structure group is the subgroup of invertible $\mathds{R}$-linear transformations $\tau$ preserving the cubic
 norm:
 \be
 \Str_0(\J):=\{\tau\in \Iso_{\R}(\J)| N(\tau A)=N(A)\}\\
\ee
\end{definition}

A cubic Jordan algebra element may be assigned a $\Str(\J)$ invariant \emph{rank} \cite{Jacobson:1961}.
\begin{definition}[Cubic Jordan algebra rank] A non-zero element $A\in\J$ has a rank given by:
\be
\begin{split}
\textrm{\emph{Rank}} A =1& \Leftrightarrow A^\sharp=0;\\
\textrm{\emph{Rank}} A =2& \Leftrightarrow N(A)=0,\;A^\sharp\not=0;\\
\textrm{\emph{Rank}} A =3& \Leftrightarrow N(A)\not=0.\\
\end{split}
\ee
\end{definition}

\subsection{Explicit Orbit Representatives of Lorentzian Spin Factors\label{sec:redJ}}

In this section we obtain the orbits representatives of the cubic Lorentzian spin factor Jordan algebras, which will be used in the subsequent treatment of the FTS orbits. The details of these algebras can be found in \cite{Gunaydin:2005zz}, where they were used in the context of generalised spacetimes and phase spaces.

Given  a vector space $V$ over a field $\F$ with a non-degenerate
quadratic form $Q(v), v\in V$, containing a base point $Q(c_0)=1$,
we may construct a  cubic Jordan algebra $\J_V=\F\oplus V$ with base
point $c=(1; c_0)\in \J_V$ and cubic norm, \be N(A)=aQ(v), \quad (a;
v)\in \J_V. \ee See, for example, \cite{McCrimmon:2004,
Krutelevich:2004}. In particular, the Lorentzian spin factors\footnote{In general, $\Gamma_{m,n}$ is a Jordan algebra with a
quadratic form of pseudo-Euclidean signature $\left( m,n\right) $,
\textit{i.e.} the Clifford algebra of $O\left( m,n\right) $
\cite{Jordan:1933vh}.}
$\mathfrak{J}_{1,n-1}:=\R\oplus\Gamma_{1,n-1}$ are defined by the
cubic norm, \be N(A)=aa_\mu a^\mu=a(a_{0}^{2}-a_ia_i), \quad
\textrm{where} \quad a\in\R\quad\textrm{and}\quad a_\mu\in\R^{1,n-1}
\ee for elements $A\in\mathfrak{J}_{1,n-1}$, \be A=(a; a_\mu)=(a;
a_0, a_i). \ee For notational convenience, we will often only write
the first three components  $(a; a_0, a_1)$ if $a_i=0$ for $i>1$.
The linearisation of the cubic norm is given by \be
N(A,B,C)=\frac{1}{3}(ab_\mu c^\mu+ca_\mu b^\mu+bc_\mu a^\mu). \ee
Evidently, $E=(1;1,0)$ is a base point and the corresponding Jordan
algebra maps are given by

\be\label{eq:cubicdefsspin}
\begin{split}
    \Tr(A)&=a+2a_0,\\
    S(A)&=2aa_0+a_\mu a^\mu,\\
    S(A, B)&=2(ab_0+ba_0+a_\mu b^\mu),\\
        \Tr(A, B)&=ab+2(a_0b_0+a_ib_i).
\end{split}
\ee
Using  $\Tr(A^\#,B)=3N(A,A,B)$ one obtains the quadratic adjoint \be
A^\#=(a_\mu a^\mu; aa^\mu), \ee where the index has been raised
using the mostly minus Lorentzian metric
$\eta^{\mu\nu}=\diag(1,-1,-1,\ldots,-1)$. Its linearisation $A\times
B=(A+B)^\#-A^\#-B^\#$ yields \be A\times B=(2a_\mu b^\mu;
ba^\mu+ab^\mu). \ee It is not difficult to verify that \be
A^{\#\#}=N(A)A, \ee so that the quadratic adjoint is indeed Jordan
cubic. Hence, we obtain a well defined Jordan algebra with Jordan
product defined by Eq. (\ref {eq:J3prod}), yielding \be A\circ
B=(ab; a_0b_0+\sum_j a_jb_j,a_0b_i+b_0a_i). \ee Three irreducible
idempotents are given by \be E_1=(1; 0),\quad E_2=(0;
\half,\half),\quad E_3=(0; \half,-\half). \ee The reduced structure
group $\Str_0(\Jnt)$ is given by $\SO(1,1)\times\SO(1,n-1)$, where
we have chosen to restrict to determinant 1 matrices.   Explicitly,
$A$ transforms as \be (a; a_\mu)\mapsto (e^{2\lambda}a;
e^{-\lambda}\Lambda_{\mu}{}^{\nu}a_\nu),\quad \text{where}\quad
\lambda\in\R, \Lambda\in\SO(1, n-1). \ee

\begin{theorem}\label{thm:d5red} For $n\geq 2$ every element $A=(a; a_\mu)\in\Jn$ of a given rank is $\SO(1,1)\times\SO(1,n-1)$ related to one of the following canonical forms:
\begin{enumerate}
\item Rank 1
\begin{enumerate}
\item  $A_{1a}=(1;0)=E_1$
\item $ A_{1b}=(-1;0)=-E_1$
\item $ A_{1c}=(0; \half, \half)=E_2$
\end{enumerate}
\item Rank 2
\begin{enumerate}
\item   $A_{2a}=(0; 1,0)=E_1+E_2$
\item  $ A_{2b}=(0; 0,1)=E_1-E_2$
\item  $ A_{2c}=(1; \half, \half)=E_1+E_2$
\item  $ A_{2d}=(-1; \half, \half)=-E_1+E_2$
\end{enumerate}
\item Rank 3
\begin{enumerate}
\item   $A_{3a}=(1; \half(1+k), \half(1-k))=E_1+E_2+kE_3$
\item   $A_{3b}=(-1; \half(1+k), \half(1-k))=-E_1+E_2+kE_3$
\end{enumerate}
\end{enumerate}
Note, if one restricts to the identity-connected component of
$\SO(1,n-1)$, each of the orbits $A_{1c}$, $A_{2c}$  and $A_{2d}$
splits into two cases, $A^{\pm}_{1c}$, $A^{\pm}_{2c}$ and
$A^{\pm}_{2d}$, corresponding to the future and past light cones.
Similarly, $A_{2a}$ splits into two disconnected components,
$A^{\pm}_{2a}$, corresponding to the future and past hyperboloids.
For $k>0$ the orbits $A_{3a}$ and $A_{3b}$ also split into
disconnected future and past hyperboloids, $A^{\pm}_{3a}$ and
$A^{\pm}_{3b}$.
\end{theorem}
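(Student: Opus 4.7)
\medskip

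\noindent\textbf{Proof plan.} The group acts as $(a;a_\mu)\mapsto(e^{2\lambda}a;\,e^{-\lambda}\Lambda_\mu{}^\nu a_\nu)$, so the natural invariants are: the sign of $a$ (preserved because $e^{2\lambda}>0$), the causal character of $a_\mu$ (timelike, spacelike, lightlike, or zero, since $a_\mu a^\mu\mapsto e^{-2\lambda}a_\mu a^\mu$), and the full cubic invariant $N(A)=a\,a_\mu a^\mu$. Within the identity-connected subgroup $\SO_0(1,n-1)$ one gains the extra invariant of the sign of $a_0$ on timelike and lightlike $a_\mu$. The strategy is to stratify by rank, classify orbits by these invariants, and exhibit an explicit group element bringing $A$ to the listed canonical form.

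\medskip

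\noindent\textbf{Step 1 (rank 1).} By definition rank $1$ means $A^\sharp=(a_\mu a^\mu;\,a a^\mu)=0$. This splits into two mutually exclusive cases: (i) $a\neq 0$ and $a^\mu=0$, or (ii) $a=0$ and $a_\mu\neq 0$ is lightlike. In case (i), the $\SO(1,1)$ factor rescales $a$ to $\pm 1$ according to its sign, giving $A_{1a}$ and $A_{1b}$. In case (ii), since the transitivity of $\SO(1,n-1)$ on the null cone minus the origin is standard, one first rotates $a_\mu$ to $(c,c,0,\dots,0)$ and then uses $\SO(1,1)$ to set $c=\tfrac12$, yielding $A_{1c}$; restricting to $\SO_0(1,n-1)$ one cannot switch future and past null cones, so one obtains the split $A_{1c}^\pm$.

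\medskip

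\noindent\textbf{Step 2 (rank 2).} Here $N(A)=a\,a_\mu a^\mu=0$ but $A^\sharp\neq 0$, again leaving two mutually exclusive branches: (a) $a=0$ and $a_\mu$ non-lightlike, (b) $a\neq 0$ and $a_\mu$ lightlike (possibly zero, but then $A^\sharp=0$, so $a_\mu\neq 0$). In branch (a) the subcase $a_\mu a^\mu>0$ is reduced by $\SO(1,n-1)$ to $(0;c,0)$ and then rescaled to $A_{2a}$; the subcase $a_\mu a^\mu<0$ is reduced to $A_{2b}$. In branch (b), rotating to $(c,c,0,\dots,0)$ and then using $\SO(1,1)$ jointly — exploiting that $(a,c)\mapsto(e^{2\lambda}a,e^{-\lambda}c)$ leaves $a/c^{2}$'s sign but not its value invariant — lets us set $c=\tfrac12$ and simultaneously normalise $a=\pm 1$ according to its sign, giving $A_{2c}$ or $A_{2d}$. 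The two branches are manifestly inequivalent because the entry $a$ is a group-invariant predicate (zero vs. non-zero), and within each branch the sign of $a$ or of $a_\mu a^\mu$ is an invariant.

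\medskip

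\noindent\textbf{Step 3 (rank 3).} Now $N(A)\neq 0$, so $a\neq 0$ and $a_\mu$ is non-lightlike. Rotate $a_\mu$ by $\SO(1,n-1)$ to $(a_0,a_1,0,\dots,0)$; then set $\lambda$ so that $e^{2\lambda}a=\pm 1$, matched to the sign of $a$. A further boost/rotation inside $\SO(1,n-1)$ may be used to put $a_\mu$ in the symmetric form $(\tfrac12(1+k),\tfrac12(1-k))$, where $k$ is fixed by the invariant $a_\mu a^\mu=k$; since $a=\pm 1$, one has $k=\pm N(A)$, so $k$ is uniquely determined by the orbit and is an invariant label. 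This produces exactly $A_{3a}$ and $A_{3b}$, distinguished by the (invariant) sign of $a$. The final remark on splitting under $\SO_0(1,n-1)$ follows by the same future/past light-cone and two-sheeted hyperboloid argument applied to the orbit of $a_\mu$ under the identity-connected Lorentz group.

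\medskip

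\noindent\textbf{Main obstacle.} The reductions themselves are routine orbit manipulations on $\R^{1,n-1}$ combined with the one-dimensional $\SO(1,1)$ rescaling. The delicate point is verifying \emph{inequivalence} of the listed orbits: one must check that the pair of discrete invariants $(\mathrm{sgn}\,a,\,\mathrm{sgn}\,a_\mu a^\mu)$ together with the continuous invariant $k$ (in the rank-$3$ case) and the rank itself form a complete set of invariants for the action. This is the key step that must be argued carefully, particularly in separating the rank-$2$ branches (a) and (b), where the vanishing of $a$ is the distinguishing property, and in the rank-$3$ case, where uniqueness of $k$ follows from $N(A)$ being the only independent polynomial invariant beyond sign data.
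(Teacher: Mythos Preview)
Your approach is essentially identical to the paper's: both stratify by rank, split into the same subcases according to whether $a$ vanishes and the causal type of $a_\mu$, and then use Lorentz transitivity together with the $\SO(1,1)$ rescaling to reach the listed representatives. The paper's proof is terser but follows exactly the same case analysis.

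Two minor remarks. First, in your Step~2(b) the phrasing suggests that the single $\SO(1,1)$ parameter $\lambda$ suffices to set \emph{both} $a=\pm1$ and $c=\tfrac12$ after rotating the null vector to $(c,c,0,\dots,0)$; but $(a,c)\mapsto(e^{2\lambda}a,e^{-\lambda}c)$ is one equation short. What actually does the job (and what the paper uses implicitly) is that $\SO(1,n-1)$ acts transitively on nonzero null vectors, so a further boost inside $\SO(1,n-1)$ rescales $c$ independently of $a$. You use exactly this extra freedom correctly in Step~3, so the fix is immediate. Second, your ``main obstacle''---pairwise inequivalence of the canonical forms---is not actually part of the theorem as stated (it only asserts that every element can be brought to \emph{one} of the listed forms), and the paper's proof does not address it either. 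Your invariants $(\mathrm{sgn}\,a,\ \mathrm{sgn}\,a_\mu a^\mu,\ N(A))$ do separate the orbits, so the extra discussion is correct, just supererogatory here.
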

\begin{proof}
$\text{Rank} A= 1 \Rightarrow$
\be
A^\sharp=(a_\mu a^\mu, aa^\mu)=0,\qquad (a; a_\mu)\not=0.
\ee
This corresponds to two cases: (i) $a_\mu=0, a\not=0$ or (ii) $a=0, a_\mu a^\mu=0, a_\mu\not=0$. In case (i) we have
\be
A=(a; 0)\mapsto (e^{2\lambda}a; 0)=(\pm1; 0).
\ee
In case (ii) we have
\be
A=(0; a_\mu)\mapsto(0; \Lambda_{\mu}{}^{\nu}a_\nu)= (0; \half,\half).
\ee

$\text{Rank} A= 2 \Rightarrow$
\be
N(A)=aa_\mu a^\mu=0,\qquad A^\sharp=(a_\mu a^\mu;  aa^\mu)\not=0.
\ee
This corresponds to two cases: (i) $a=0, a_\mu a^\mu\not=0$ or (ii) $a_\mu a^\mu=0, a\not=0, a_\mu\not=0$. In case (i) we have
\be
A=(0; a_\mu)\mapsto (0; e^{-\lambda}\Lambda_{\mu}{}^{\nu}a_\nu)=(0; 1,0)\quad\text{or}\quad(0; 0,1).
\ee
In case (ii) we have
\be
A=(a; a_\mu)\mapsto(e^{2\lambda}a; e^{-\lambda}\Lambda_{\mu}{}^{\nu}a_\nu)= (\pm1; \half,\half).
\ee

$\text{Rank} A= 3 \Rightarrow$
\be
N(A)=aa_\mu a^\mu\not=0.
\ee
Hence,
\be
A=(a; a_\mu)\mapsto (e^{2\lambda}a; e^{-\lambda}\Lambda_{\mu}{}^{\nu}a_\nu)=(\pm1; \half(1+k), \half(1-k)).
\ee
where $N(A)=\pm k$.
\qed\end{proof}
Note, the Lorentzian spin-factor  construction may be generalised to an arbitrary pseudo-Euclidean signature Jordan algebra $\J_{p-1, q-1}=\R\oplus\Gamma_{p-1, q-1}$  by defining the cubic norm,
\be\label{eq:n4gen}
N(A)=a a_{\mu}a^{\mu}, \quad(a; a_\mu)\in\J_{p-1, q-1},
\ee
where the index has been raised with a $\{+^{p-1}, -^{q-1}\}$ signature pseudo-Euclidean metric. The same base point $c=(1;1,0)$ may be used. A more ``democratic''  choice valid for any $p\geq 2$ is given by
\[c=\frac{1}{\sqrt{p-1}}(\sqrt{p-1}; \underbrace{1,1\ldots,1}_{p-1},\underbrace{0,0\ldots, 0}_{q-1}),\] although it obscures some of the symmetries by unnecessarily complicating the basic identities. In this case the reduced structure group is given by,
\be \Str_0(\J_{p-1,q-1})=\SO(1,1)\times\SO(p-1,q-1). \ee The
analysis goes through analogously to the Lorentzian case so we will
not treat it in detail here. See, for example,
\cite{Gunaydin:2009zza,ICL-2} for further details.
\subsubsection{Special Cases: $\J_{3\R}, \J_{2\R}$ and $\J_{\R}$\label{sec:d5special}}

%As we shall see in \autoref{sec:d4special} the Freudenthal triple systems defined over $\J_{3\R}, \J_{2\R}$ and $\J_{\R}$ are closely related by symmetrizing a $2\times$

\paragraph{Case 1: $\J_{3\R}$} The $n=2$ case $\J_{1,1}=\R\oplus\Gamma_{1,1}$ may be written as $\J_{3\R}=\R\oplus\R\oplus\R$. For $(a_1, a_2, a_3)\in\J_{3\R}$ and $(a; a_\nu)\in\J_{1,1}$ we have,
\be
a_1=a,
\quad a_2=a_0+a_1, \quad a_3=a_0-a_1, \ee so that the  cubic
norm takes the more democratic form \be N(A)=a_1a_2a_3.\ee
While the analysis follows that of the generic $n>2$ case, presented in \autoref{thm:d5red}, we highlight this form as it makes apparent the triality symmetry of the $n=2$ cubic norm, which we will return to in \autoref{sec:d4special}.  There are just three  irreducible idempotents:
\be
E_1=(1,0,0), \qquad E_2=(0,  1, 0),\qquad E_2=(0,  0, 1).
\ee
\paragraph{Case 2: $\J_{2\R}$} For $n=1$ the quadratic form on  $\Gamma_{1, n-1}$ becomes Euclidean and \autoref{thm:d5red} no longer holds.  $\J_{1,0}$ may be written as $\J_{2\R}=\R\oplus\R$. For $A=(a, a_0)\in\J_{2\R}$
 \be N(A)=a (a_0)^2.\ee
The symmetry of the cubic norm is reduced to $\SO(1,1)$ with a discrete factor, $\Z_2$. Note,  all rank 1 and 2 elements are respectively of the form $(a; 0)$ and $(0; a_0)$, where $a, a_0\not=0$. Consequently, unlike for $n>1$, $\J_{2\R}$ is \emph{not} spanned by its rank 1 elements. There are just two irreducible idempotents:
\be
E_1=(1;0), \qquad E_2=(-1; 1).
\ee
\paragraph{Case 3: $\J_{\R}$} The sequence may be completed by defining $\J_{\R}=\R$ with cubic norm,
 \be N(A)=A^3, \quad A\in \R. \ee
This cubic norm has no non-trivial symmetries. The unique choice of base point $c=1$ yields $\Tr(A)=3A, \Tr(A, B)=3AB, A^\sharp=A^2$, from which it is clear that the cubic norm is Jordan. Note, all non-zero elements are rank 3 and there are no irreducible idempotents.
\section{Orbits of Freudenthal Triple Systems \label{sec:FTS}}

The Freudenthal triple system  provides a  natural
representation of the dyonic black hole charge vectors for a broad
class of 4-dimesional supergravity theories, see for example \cite{Gunaydin:1983rk, Ferrara:1997uz,Gunaydin:2005gd,Gunaydin:2005mx,Gunaydin:2009dq,Borsten:2011ai}. In the following
treatment, we present the axiomatic definition of the FTS which is
manifestly covariant with respect to the 4-dimensional U-duality
group $G_4$. Subsequently, we present a particular realization in
terms of Jordan algebras. This is equivalent to decomposing $G_4$
with respect to 5-dimensional U-duality group $G_5$, which is
modeled by the corresponding Jordan algebra. Consequently, this
particular realization is manifestly covariant with respect to
$G_5$.

\subsection{Axiomatic Definition of The FTS\label{sec:FTS-axio}}

An FTS may be axiomatically defined \cite{Brown:1969} as a finite
dimensional vector space $\FTS$ over a field $\F$ (not of
characteristic 2 or 3), such that:
\begin{enumerate}
\item  $\FTS$ possesses a non-degenerate antisymmetric bilinear form $\{x, y\}.$
\item $\FTS$ possesses a symmetric four-linear form $q(x,y,z,w)$ which is not identically zero.
\item If the ternary product $T(x,y,z)$ is defined on $\FTS$ by $\{T(x,y,z), w\}=q(x, y, z, w)$, then
\[3\{T(x, x, y), T(y,y,y)\}=\{x, y\}q(x, y, y, y).\]
\end{enumerate}

\subsection{Definition Over a Cubic Jordan Algebra\label{sec:FTS-J}}

Given a cubic Jordan algebra $\mathfrak{J}$ defined over a field
$\R$, there exists a corresponding  FTS
\begin{equation}
\mathfrak{F}(\mathfrak{J})=\mathds{R\oplus R}\oplus
\mathfrak{J}\oplus\mathfrak{J}.
\end{equation}
An arbitrary element $x\in \mathfrak{F}(\mathfrak{J})$ may be
written as a ``$2\times 2$ matrix''
\begin{equation}
x=\begin{pmatrix}\alpha&A\\B&\beta\end{pmatrix}, \quad\text{where}
~\alpha, \beta\in\R\quad\text{and}\quad A, B\in\mathfrak{J}.
\end{equation}
The FTS comes equipped with a non-degenerate bilinear antisymmetric quadratic form, a quartic form and a trilinear triple product:
\begin{subequations}
\begin{enumerate}
\item Quadratic form $ \{\bullet, \bullet\}$: $\mathfrak{F}(\mathfrak{J})\times\mathfrak{F}(\mathfrak{J})\to \R$
    \begin{equation}\label{eq:bilinearform}
        \{x, y\}:=\alpha\delta-\beta\gamma+\Tr(A,D)-\Tr(B,C),    \text{\qquad where\qquad}x=\begin{pmatrix}\alpha&A\\B&\beta\end{pmatrix},\; y=\begin{pmatrix}\gamma&C\\D&\delta\end{pmatrix}.
        \end{equation}
\item Quartic form $\Delta:\mathfrak{F}(\mathfrak{J})\to \R$
    \begin{equation}\label{eq:quarticnorm}
    \Delta (x):=-(\alpha\beta-\Tr(A,B))^2-4[\alpha N(A)+\beta N(B)-\Tr(A^\sharp, B^\sharp)]=:\frac{1}{2}q(x).
    \end{equation}

\item Triple product $T:\mathfrak{F}(\mathfrak{J})\times
\mathfrak{F}(\mathfrak{J})\times\mathfrak{F}(\mathfrak{J})\to\mathfrak{F}(\mathfrak{J})$ which is uniquely defined by
\begin{equation}
\{T(x, y, w), z\}=2\Delta(x, y, w, z),
\end{equation}
where $\Delta(x, y, w, z)$ is the full linearization of $\Delta(x)$ normalized such that $\Delta(x, x, x, x)=\Delta(x)$.
\end{enumerate}
\end{subequations}
Explicitly, the triple product is given by
\begin{equation}\label{eq:Tofx}
T(x)=\left(\begin{array}{cc}-\alpha^2\beta+\Tr(A,B)-N(B)&-(\beta B^\sharp-B\times A^\sharp)+(\alpha\beta-\Tr(A,B))A\\
(\alpha A^\sharp-A\times B^\sharp)-(\alpha\beta-\Tr(A,B))B&\alpha\beta^2-\Tr(A,B)+N(A)\end{array}\right).
\end{equation}
Note that \textit{all} the necessary definitions, such as the cubic
and trace bilinear forms, are inherited from the underlying Jordan
algebra $\mathfrak{J}$.

\begin{remark} For the Jordan algebras introduced in \autoref{sec:J}, we will  use the short hand notation:
\[
\begin{array}{llllllll}
\Fnt:=\FTS(\Jnt),&\Fnf:=\FTS(\Jnf),&\FTS_{3\R}:=\FTS(\J_{3\R}),&\FTS_{2\R}:=\FTS(\J_{2\R}),&\FTS_{\R}:=\FTS(\J_{\R}).
\end{array}
\]
\end{remark}

\subsection{The Automorphism Group}\label{sec:FTS-auto}

\begin{definition}[The automorphism group $\Aut(\FTS)$] The \emph{automorphism} group of an FTS is defined  as the set of invertible  $\R$-linear transformations preserving the quartic and quadratic forms:
\be
\Aut(\FTS):=\{\sigma\in\Iso_\R(\FTS)|\{\sigma x, \sigma y\}=\{x, y\},\;\Delta(\sigma x)=\Delta(x)\}\label{eq:brownfts}.
\ee
\end{definition}
Note, the conditions $\{\sigma x, \sigma y\}=\{x, y\}$ and $\Delta(\sigma x)=\Delta(x)$ immediately imply
$
 T(\sigma x)=\sigma T(x).
$

For $\FTS(\J_{3}^{\alg^{(s)}})$ the automorphism group has a two element
centre and its quotient yields the simple groups listed in
\autoref{tab:FTSsummary}, while for $\Fnt$ one obtains the
semi-simple groups $\SL(2, \R)\times\SO(2, n)$
\cite{Brown:1969,Krutelevich:2004, Gunaydin:2009zza}. In all cases
$\FTS$ forms a symplectic representation of $\Aut{(\FTS)}$, the
dimensions of which are listed in the final column of
\autoref{tab:FTSsummary}. This table covers a number 4-dimensional
supergravities: $\Fnt, \Fnf\rightarrow\mathcal{N}=2,4$
Maxwell-Einstein supergravity, $\FTS(\J_{3}^{\alg})\rightarrow\mathcal{N}=2$
``magic'' Maxwell-Einstein supergravity and
$\FTS(\J_{3}^{\Oct^{(s)}})\rightarrow \mathcal{N}=8$ maximally supersymmetric
supergravity (see, for example,
\cite{Gunaydin:1983bi,Gunaydin:1983rk,Rios:2007qn,Borsten:2008wd,Borsten:2009zy,Borsten:2010aa,Rios:2010br,ICL-2,Rios:2011fa}).
Moreover, the special case of $\mathfrak{F}_{3\R}$ (and its
generalisations) has found applications in the theory of
entanglement \cite{Borsten:2008,levay-2008, Borsten:2009yb,Levay:2009, Borsten:2010ths}.
%&\phantom{:}=\{\sigma\in\Iso_\R(\FTS)|\sigma(x\wedge y)\sigma^{-1}=\sigma x\wedge\sigma y\}, \label{eq:yokfts}

\begin{table}
\caption[Jordan algebras, corresponding FTSs, and their associated
symmetry groups]{The automorphism  group
$\Aut(\mathfrak{F}(\mathfrak{J}))$ and the dimension of its
representation $\dim\mathfrak{F}(\mathfrak{J})$ given by the
Freudenthal construction defined over the cubic Jordan algebra
$\mathfrak{J}$ (with dimension $\dim\mathfrak{J}$ and reduced
structure group $\Str_0(\mathfrak{J})$). Here $\J_{3}^{\alg^{(s)}}$ denotes the cubic Jordan algebra of $3 \times 3$ Hermitian matrices over the (split) composition algebras.  \label{tab:FTSsummary}}
\begin{tabular*}{\textwidth}{@{\extracolsep{\fill}}c*{5}{M{c}}c}
\toprule
& \text{Jordan algebra }\mathfrak{J} & \Str_0(\mathfrak{J}) & \dim\mathfrak{J} & \Aut(\mathfrak{F}(\mathfrak{J})) & \dim\mathfrak{F}(\mathfrak{J}) &\\
\hline
& \R                     & -                                  & 1   & \SL(2,\R)                                   & 4    & \\
& \R\oplus\R           & \SO(1,1)                       & 2   & \SL(2,\R)\times \SL(2,\R)                  & 6    & \\
& \R\oplus\R\oplus\R & \SO(1,1)\times \SO(1,1)    & 3   & \SL(2,\R)\times \SL(2,\R)\times \SL(2,\R) & 8    & \\
& \R\oplus \Gamma_{1,n-1}           & \SO(1,1)\times \SO(1,n-1)  & n+1 & \SL(2,\R)\times \SO(2,n)                & 2(n+2) & \\
& \R\oplus \Gamma_{5,n-1}           & \SO(1,1)\times \SO(5,n-1)  & n+5 & \SL(2,\R)\times \SO(6,n)                & 2(n+6) & \\
& \J_{3}^{\R}             & \SL(3, \R)                         & 6   & \Sp(6,\R)                                   & 14   & \\
& \J_{3}^{\C}             & \SL(3,\C)                         & 9   & \SU(3,3)                                 & 20   & \\
& \J_{3}^{\C^s}             & \SL(3,\R)\times\SL(3, \R)                         & 9   & \SL(6, \R)                                 & 20   & \\
& \J_{3}^{\Q}             & \SU^\star(6)                   & 15  & \SO^\star(12)                            & 32   & \\
& \J_{3}^{\Q^s}             & \SL(6, \R)                   & 15  & \SO(6,6)                            & 32   & \\
& \J_{3}^{\Oct}             & E_{6(-26)}                   & 27  & E_{7(-25)}                            & 56   & \\
& \J_{3}^{\Oct^s}             & E_{6(6)}                   & 27  & E_{7(7)}                            & 56   & \\
\bottomrule
\end{tabular*}
\end{table}

\begin{lemma}The Lie algebra $\mathfrak{Aut}(\mathfrak{F})$ of $\Aut(\FTS)$ is given by
\be \mathfrak{Aut}(\mathfrak{F})=\{\phi\in
\Hom_{\R}(\mathfrak{F})|\Delta (\phi x,x,x,x)=0,\{\phi x, y\}+\{x,
\phi y\}=0, \;\forall x,y\in \mathfrak{F}\}. \ee
\end{lemma}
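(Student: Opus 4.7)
The plan is to treat $\Aut(\FTS)$ as a subgroup of $\Iso_\R(\FTS)$ cut out by the two polynomial conditions in the definition, and to identify its Lie algebra $\mathfrak{Aut}(\FTS)$ with the set of $\phi \in \Hom_\R(\FTS)$ for which the one-parameter family $\sigma_t := \exp(t\phi)$ lies in $\Aut(\FTS)$ for every $t \in \R$. The proof then reduces to checking, in both directions, that the two defining invariance equations for $\sigma_t$ are equivalent to the two infinitesimal conditions on $\phi$ asserted in the lemma.

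For the ``only if'' direction I would differentiate the two defining equations at $t = 0$. From $\{\sigma_t x, \sigma_t y\} = \{x, y\}$, bilinearity and the chain rule immediately yield $\{\phi x, y\} + \{x, \phi y\} = 0$. From $\Delta(\sigma_t x) = \Delta(x)$, using $\Delta(x) = \Delta(x,x,x,x)$ with the fully symmetric $4$-linear form from the excerpt, the chain rule gives
\[
0 = \left.\tfrac{d}{dt}\Delta(\sigma_t x,\sigma_t x,\sigma_t x,\sigma_t x)\right|_{t=0} = 4\,\Delta(\phi x,x,x,x),
\]
which is the second condition.

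For the converse I would assume the two infinitesimal conditions on $\phi$ and verify that $\sigma_t := \exp(t\phi)$ preserves both $\{\cdot,\cdot\}$ and $\Delta$. The key computations are
\[
\tfrac{d}{dt}\{\sigma_t x,\sigma_t y\} = \{\phi\sigma_t x,\sigma_t y\} + \{\sigma_t x,\phi\sigma_t y\} = 0,
\]
where the vanishing follows by applying the first hypothesis at $(\sigma_t x,\sigma_t y)$, and
\[
\tfrac{d}{dt}\Delta(\sigma_t x) = 4\,\Delta(\phi\sigma_t x,\sigma_t x,\sigma_t x,\sigma_t x) = 0,
\]
where the vanishing follows by applying the second hypothesis at $y := \sigma_t x$. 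Both invariants are therefore constant along the flow and equal to their values at $t = 0$, so $\sigma_t \in \Aut(\FTS)$ for all $t$, as required.

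The only subtle point is the polarization underlying the second condition: one must be comfortable that the single diagonal constraint $\Delta(\phi x,x,x,x) = 0$ genuinely captures the full infinitesimal invariance of the quartic form. In characteristic zero this is automatic, since the symmetric $4$-linear form $D(\phi)(x,y,z,w) := \Delta(\phi x,y,z,w) + \Delta(x,\phi y,z,w) + \Delta(x,y,\phi z,w) + \Delta(x,y,z,\phi w)$ is determined by its diagonal values, so $D(\phi) \equiv 0$ is equivalent to $\Delta(\phi x,x,x,x) = 0$ for all $x$. This is the step I would expect to require the most care, though it is routine; closure of the resulting set under the commutator bracket is then immediate from the linearity of both conditions in $\phi$.
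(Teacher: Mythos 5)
Your proof is correct, and the forward direction (differentiating at $t=0$) is exactly what the paper does. Where you differ is the converse: the paper expands $\sigma=e^{t\phi}$ as a power series and cancels the higher-order terms explicitly (it writes this out for the symplectic form and disposes of the quartic with a ``similarly''), whereas you differentiate along the flow and observe that $\tfrac{d}{dt}\{\sigma_t x,\sigma_t y\}$ and $\tfrac{d}{dt}\Delta(\sigma_t x)=4\Delta(\phi\sigma_t x,\sigma_t x,\sigma_t x,\sigma_t x)$ vanish because the infinitesimal hypotheses hold at the flowed points $(\sigma_t x,\sigma_t y)$, so both invariants are constant in $t$. Your route is cleaner and, importantly, makes the quartic case genuinely parallel to the symplectic one: in the paper's term-by-term approach the order-$t^2$ cancellation for $\Delta$ already requires the polarized consequence $\Delta(\phi^2x,x,x,x)+3\Delta(\phi x,\phi x,x,x)=0$ of the diagonal condition, which the ``similarly'' glosses over, while your argument needs only the diagonal condition $\Delta(\phi y,y,y,y)=0$ evaluated at $y=\sigma_t x$. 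Two small remarks: the polarization discussion at the end of your write-up, while true in characteristic zero, is not actually needed for your flow argument, precisely because only diagonal values enter; and closure under the commutator is not ``immediate from linearity'' (linearity only gives a subspace) --- it follows instead from the characterization of the set as $\{\phi\mid e^{t\phi}\in\Aut(\FTS)\ \forall t\}$, but since the lemma asserts only the set equality, neither you nor the paper needs it.
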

\begin{proof} If $\phi\in\Hom_{\R}(\mathfrak{F})$ satisfies $\Delta (e^{t\phi}x, e^{t\phi}x, e^{t\phi}x, e^{t\phi}x)=\Delta (x,x,x,x)$, where $t\in \R$,   differentiating with respect to $t$ and then setting $t=0$ one obtains  $\Delta (\phi x,x,x,x)=0$. Similarly, if $\{e^{t\phi} x, e^{t\phi} y\}=\{x,y\}$, then $\{\phi x, y\}+\{x, \phi y\}=0$. Conversely, assuming $\{\phi x, y\}+\{x, \phi y\}=0$ for all $x,y\in\mathfrak{F}$ let $\sigma=e^{t\phi}$. Then,
\be
\begin{split}
\{e^{t\phi} x, e^{t\phi} y\}&=\{(1+t\phi+\half t^2\phi^2+\ldots)x,(1+t\phi+\half t^2\phi^2+\ldots)y\}\\
&=\{x,y\}+t(\{\phi x, y\}+\{x, \phi y\})\\
&\phantom{=}+t^2(\half\{\phi x, \phi y\}+\half\{\phi^2 x,  y\}+\half\{\phi x, \phi y\}+\half\{ x, \phi^2 y\})+\dots\\
&=\{x,y\}.
\end{split}
\ee Similarly, assuming $\Delta (\phi x,x,x,x)=0$ and letting
$\sigma=e^{t\phi}$, then $\Delta (\sigma x)=\Delta (x)$.
\qed\end{proof}

The automorphism group may also be defined in terms of the  \emph{Freudenthal product} \cite{Shukuzawa:2006,Yokota:2009}. The Freudenthal product is useful in that it can be used to form elements of the Lie algebra and, further, we will need it to distinguish the orbits.

\begin{definition}[Freudenthal product] For $x=(\alpha, \beta, A, B),\; y=(\delta,\gamma,C,D)$, define the Freudenthal product
\[
\wedge:\FTS\times\FTS\rightarrow\Hom_\R(\FTS)
\]
by, \be x\wedge y:=\Phi(\phi, X, Y, \nu), \quad\textrm{where}\quad
\left\{ \begin{array}{lll}
\phi    &=&-(A\vee D+B\vee C)\\
X           &=&-\half(B\times D-\alpha C-\delta A)\\
Y           &=&\half(A\times C-\beta D-\gamma B)\\
\nu     &=&\frac{1}{4}(\Tr(A,D)+\Tr(C,B)-3(\alpha\gamma+\beta\delta)),\\
\end{array}\right.
\ee
and $A\vee B\in\Rstr{\J}$ is defined by $(A\vee B)C=\half\Tr(B,C)A+\frac{1}{6}\Tr(A,B)C-\half B\times(A\times C)$.  The action of $\Phi:\FTS  \rightarrow\FTS$ is given by
\be\label{eq:ftslieaction}
\Phi(\phi,X,Y,\nu)\begin{pmatrix}\alpha&A\\B&\beta\end{pmatrix}=\begin{pmatrix}\alpha\nu+(Y,B)&\phi A-\frac{1}{3}\nu A+2Y\times B +\beta X \\-^t\phi B+\frac{1}{3}\nu B+2X\times A+\alpha Y&-\beta\nu+(X,A)\end{pmatrix}.
\ee
\end{definition}

\begin{lemma} The automorphism group is given by the set of invertible  $\R$-linear transformations preserving the Freudenthal product:
\be
\Aut{\FTS}=\{\sigma\in\Iso_\R(\FTS) | \sigma(x\wedge y)\sigma^{-1}=\sigma x\wedge\sigma y\}.
\ee
\end{lemma}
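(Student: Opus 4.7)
The strategy is to show that the conjugation condition $\sigma(x\wedge y)\sigma^{-1}=\sigma x\wedge\sigma y$ is equivalent to preservation of $\{\cdot,\cdot\}$ and $\Delta$. The bridge between the two characterisations is an identity expressing $(x\wedge y)z$ entirely in terms of the intrinsic FTS data. Specifically, comparing \eqref{eq:ftslieaction} with the full polarization of \eqref{eq:Tofx}, one establishes an identity of the schematic form
\[
(x\wedge y)z \;=\; c_1\,T(x,y,z) + c_2\,\{x,z\}y + c_3\,\{y,z\}x + c_4\,\{x,y\}z,
\]
for fixed universal rational constants $c_i$. Verifying this reduces to matching the four $\R\oplus\R\oplus\J\oplus\J$ components, using the Jordan-cubic identities $(A^\sharp)^\sharp=N(A)A$, $A\times A=2A^\sharp$, $\Tr(A\times B,C)=6N(A,B,C)$, together with the defining formula $(A\vee B)C=\tfrac12\Tr(B,C)A+\tfrac16\Tr(A,B)C-\tfrac12 B\times(A\times C)$ for the $\Rstr{\J}$-piece of $x\wedge y$.

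Granting the bridge identity, the inclusion $\Aut(\FTS)\subseteq\{\sigma\mid\sigma(x\wedge y)\sigma^{-1}=\sigma x\wedge\sigma y\}$ is immediate. Preservation of $\{\cdot,\cdot\}$ and $\Delta$, combined with nondegeneracy of the symplectic form and the defining relation $\{T(x,y,w),z\}=2\Delta(x,y,w,z)$, forces $\sigma\,T(x,y,z)=T(\sigma x,\sigma y,\sigma z)$. Applying $\sigma$ term by term to the bridge identity then yields $\sigma\bigl((x\wedge y)z\bigr)=(\sigma x\wedge\sigma y)(\sigma z)$ for every $z$, which is the conjugation equality.

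For the reverse inclusion, one extracts $\{\cdot,\cdot\}$ and $T$ separately from the operator $x\wedge y$. The central scalar $\nu(x,y)$ appearing in the definition is a canonical invariant of the endomorphism $x\wedge y$ (for instance it is the eigenvalue on the one-dimensional grading subspace, or equivalently the scalar obtained by evaluating $x\wedge y$ on the base point $c$ and projecting onto the identity direction), so $\nu(\sigma x,\sigma y)=\nu(x,y)$. Since the antisymmetric combination $x\wedge y - y\wedge x$ is proportional to the grading element with coefficient a fixed multiple of $\{x,y\}$, one recovers $\{\sigma x,\sigma y\}=\{x,y\}$. With $\{\cdot,\cdot\}$ now preserved, the bridge identity isolates $T$, giving $\sigma T(x,y,z)=T(\sigma x,\sigma y,\sigma z)$, whence $\Delta(\sigma x)=\tfrac12\{T(\sigma x),\sigma x\}=\tfrac12\{\sigma T(x),\sigma x\}=\Delta(x)$.

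The main obstacle is the bridge identity: while conceptually a direct computation, it requires careful bookkeeping with the Jordan cross product $\times$, the structure-algebra element $A\vee B$, and the trace bilinear form. The secondary delicate point is the recovery of $\{\cdot,\cdot\}$ in the converse direction; this step is cleanest when phrased as the isolation of $\{x,y\}$ from the antisymmetric part of $\wedge$, and is where one appeals to the calculations already in \cite{Yokota:2009,Shukuzawa:2006}.
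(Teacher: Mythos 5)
Your forward inclusion ($\Aut(\FTS)\subseteq$ conjugation-preservers) is sound and coincides with the paper's argument: your ``bridge identity'' is exactly the relation the paper derives from Yokota, $(x\wedge y)z=\tfrac32 T(x,y,z)-\tfrac18\{z,y\}x+\tfrac18\{x,z\}y$ (so $c_4=0$), which the paper cites rather than verifying component-by-component. The genuine gap is in your converse direction, where both structural claims fail. First, the antisymmetrised product $x\wedge y-y\wedge x$ is \emph{not} a multiple of the grading element with coefficient $\{x,y\}$: the $X$, $Y$ and $\nu$ entries of $x\wedge y$ are manifestly symmetric under $x\leftrightarrow y$ (indeed in Yokota's conventions the whole product is symmetric, $x\wedge y=y\wedge x$), so the antisymmetrisation has no component along the grading direction at all; with the paper's formula it lies entirely in the $\Rstr{\J}$ block, being $-(A\vee D-D\vee A)-(B\vee C-C\vee B)$. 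Hence the symplectic form cannot be recovered this way. Second, $\nu(x,y)$ is not a conjugation invariant of the endomorphism $x\wedge y$: the ``one-dimensional grading subspace'' and the projection onto the identity direction are features of the chosen splitting $\R\oplus\R\oplus\J\oplus\J$, which a general $\sigma$ does not preserve. In fact $\nu(x,y)=\tfrac14\bigl(\Tr(A,D)+\Tr(C,B)-3(\alpha\gamma+\beta\delta)\bigr)$ is a nonzero \emph{symmetric} bilinear form, and $\FTS$ carries no $\Aut(\FTS)$-invariant symmetric bilinear form (the invariant form is the symplectic one), so $\nu(\sigma x,\sigma y)=\nu(x,y)$ cannot hold for all $\sigma\in\Aut(\FTS)$ --- which, by the very statement you are proving, shows it cannot be a consequence of the conjugation hypothesis.

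The repair is already contained in your bridge identity: specialising it to $z=x$ gives $(x\wedge y)x-(x\wedge x)y+\tfrac38\{x,y\}x=0$, and applying the conjugation hypothesis to this relation for $\sigma x,\sigma y$ yields $\bigl(\{\sigma x,\sigma y\}-\{x,y\}\bigr)\sigma x=0$, hence $\{\sigma x,\sigma y\}=\{x,y\}$; then $T(x)=\tfrac23(x\wedge x)x$ gives $T(\sigma x)=\sigma T(x)$ and so $\Delta(\sigma x)=\tfrac12\{T(\sigma x),\sigma x\}=\Delta(x)$. This is precisely the route the paper takes (its identity quoted from Yokota and the ensuing three-line computation); as written, your recovery of $\{\cdot,\cdot\}$ from the antisymmetric part and from the alleged invariance of $\nu$ does not work.
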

\begin{proof} We proceed by establishing the equivalence
\be
\sigma(x\wedge y)\sigma^{-1}=\sigma x\wedge\sigma y\Leftrightarrow \{\sigma x, \sigma y\}=\{x, y\},\;\Delta(\sigma x)=\Delta(x).
\ee
We begin with the right implication $\Rightarrow$. First, following \cite{Yokota:2009}, we show that
$\sigma(x\wedge y)\sigma^{-1}=\sigma x\wedge\sigma y\Rightarrow \{\sigma x, \sigma y\}=\{x, y\}$.
Using the identity   \cite{Yokota:2009},
\be\label{eq:yok}
(x\wedge y)x-(x\wedge x)y+\frac{3}{8}\{x,y\}x=0,
\ee
 it follows that $(\sigma x\wedge \sigma y)\sigma x-(\sigma x\wedge \sigma x)\sigma y+\frac{3}{8}\{\sigma x,\sigma y\}\sigma x=0$, which, from our assumption $\sigma(x\wedge y)\sigma^{-1}=\sigma x\wedge\sigma y$ implies
\be
\begin{split}
 &\sigma( x\wedge  y) x-\sigma( x\wedge  x) y+\frac{3}{8}\{\sigma x,\sigma y\}\sigma x=0\\
\Rightarrow &\sigma(-\frac{3}{8}\{x,y\}x )+\frac{3}{8}\{\sigma x,\sigma y\}\sigma x=0\\
\Rightarrow &\frac{3}{8}(\{\sigma x,\sigma y\}-\{x,y\})\sigma x=0\\
\Rightarrow &\{\sigma x,\sigma y\}=\{x,y\}.
\end{split}
\ee
Since $\frac{4}{3}(x\wedge x)x=T(x)$, $\sigma(x\wedge y)\sigma^{-1}=\sigma x\wedge\sigma y$ implies $T(\sigma x)=\sigma T(x)$ and therefore
$
\{\sigma x, \sigma y\}=\{x, y\}\Rightarrow \Delta(\sigma x)=\Delta(x).
$

To establish the left implication $\Leftarrow$, we begin by noting that  $T(x, y, z)=\frac{4}{9}[(x\wedge y)z+(y\wedge z)x+(z\wedge x)y]$. Then from the identity,
\be\label{eq:yok2}
2(x\wedge y)z-(x\wedge z)y-(y\wedge z)x+\frac{3}{8}\{z,y\}x-\frac{3}{8}\{x,z\}y=0,
\ee
which is easily obtained from \eqref{eq:yok}, see Lemma 4.1.1 of \cite{Yokota:2009}, we have,
\be\label{eq:yok3}
-\frac{9}{4}T(x,y,z)+3(x\wedge y)z+\frac{3}{8}\{z,y\}x-\frac{3}{8}\{x,z\}y=0.
\ee
Since $\sigma T(x,y,z)=T(\sigma x,\sigma y, \sigma z)$, our assumption $\{\sigma x, \sigma y\}=\{x, y\}$ together with \eqref{eq:yok3} implies
\be\label{eq:yok4}
\sigma [-\frac{9}{4}T(x,y,z)+\frac{3}{8}\{z,y\}x-\frac{3}{8}\{x,z\}y]+3(\sigma x\wedge \sigma y)\sigma z=0
\ee 
and so, substituting $-3(x\wedge y)\sigma^{-1}\sigma z$ in the square parentheses of \eqref{eq:yok4}, 
\be
3[(\sigma x\wedge \sigma y)-\sigma( x\wedge y)\sigma^{-1}]\sigma z=0
\ee 
implying
$
(\sigma x \wedge \sigma y)=\sigma ( x \wedge  y)\sigma^{-1},
$
as required.
\qed\end{proof}
Finally, we recall  the explicit elements of $\Aut(\FTS)$:

\begin{lemma}[Seligman, 1962; Brown, 1969 \cite{Seligman:1962, Brown:1969}] The following transformations generate elements of $\Aut(\FTS)$:
\be\label{eq:ftstrans}
\begin{split}
\varphi(C):\pmtwo{\alpha}{A}{B}{\beta}&\mapsto \pmtwo{\alpha+(B,C)+(A,C^\sharp)+\beta N(C)}{A+\beta C}{B+A\times C +\beta C^\sharp}{\beta};\\
\psi(D):\pmtwo{\alpha}{A}{B}{\beta}&\mapsto \pmtwo{\alpha}{A+B\times D +\alpha D^\sharp}{B+\alpha D}{\beta+(A,D)+(B,D^\sharp)+\alpha N(D)};\\
\widehat{\tau}:\pmtwo{\alpha}{A}{B}{\beta}&\mapsto \pmtwo{\lambda^{-1}\alpha}{\tau A}{^t\tau^{-1} B}{\lambda\beta};
\end{split}
\ee
where $C,D\in\J$ and $\tau\in \Str(\J)$ s.t. $N(\tau A)=\lambda N(A)$.
\end{lemma}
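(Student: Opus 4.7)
The plan is to verify, for each of the three generator types $\varphi(C)$, $\psi(D)$, and $\widehat{\tau}$, that both the quadratic form $\{\cdot,\cdot\}$ and the quartic form $\Delta$ are preserved. By the characterization of $\Aut(\FTS)$ established earlier, this suffices. In each case the argument is a direct algebraic verification, powered entirely by identities of the underlying cubic Jordan algebra.

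\textbf{Step 1: the ``Levi'' generator $\widehat{\tau}$.} Since ${}^t\tau$ is by definition the adjoint of $\tau$ with respect to $\Tr(\cdot,\cdot)$, we have $\Tr(\tau A,{}^t\tau^{-1}D)=\Tr(A,D)$. Together with $(\lambda^{-1}\alpha)(\lambda\delta)=\alpha\delta$, invariance of $\{\cdot,\cdot\}$ is immediate. For $\Delta$ the same two observations handle the $\alpha\beta$ and $\Tr(A,B)$ terms, while $\alpha N(A)$ and $\beta N(B)$ are preserved by $N(\tau A)=\lambda N(A)$ combined with the fact that ${}^t\tau^{-1}$ lies in $\Str(\J)$ with multiplier $\lambda^{-1}$ (a consequence of non-degeneracy of $\Tr$). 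The term $\Tr(A^\sharp,B^\sharp)$ is invariant via the identity $(\tau A)^\sharp=\lambda\,{}^t\tau^{-1}A^\sharp$, obtained by linearizing $N\circ\tau=\lambda N$ and using $\Tr(A^\sharp,B)=3N(A,A,B)$ together with the analogous identity for ${}^t\tau^{-1}$.

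\textbf{Step 2: the unipotent generators $\varphi(C)$ and $\psi(D)$.} These are the harder case. The key Jordan-algebra identities to deploy are: total symmetry of $\Tr(A\times B,D)=6N(A,B,D)$ in all three arguments; the cubic expansion $N(A+\beta C)=N(A)+\beta\Tr(A^\sharp,C)+\beta^{2}\Tr(A,C^\sharp)+\beta^{3}N(C)$; the quadratic expansion $(A+\beta C)^\sharp=A^\sharp+\beta(A\times C)+\beta^{2}C^\sharp$; and the Jordan cubic identity $(A^\sharp)^\sharp=N(A)A$. Preservation of $\{\cdot,\cdot\}$ under $\varphi(C)$ follows by expanding both sides and using the total symmetry identity to cancel the added terms pairwise. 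Preservation of $\Delta$ under $\varphi(C)$ is the main calculation: expand $\Delta(\varphi(C)x)$ and collect like powers of $C$; the order-$k$ contributions for $k\geq 1$ vanish identically after repeated use of the identities above. Invariance under $\psi(D)$ then follows either by the analogous calculation with the two Jordan components interchanged, or more economically by conjugating $\varphi(D)$ with the ``flip'' involution $\left(\begin{smallmatrix}\alpha&A\\B&\beta\end{smallmatrix}\right)\mapsto\left(\begin{smallmatrix}\beta&B\\A&\alpha\end{smallmatrix}\right)$, which is itself manifestly an element of $\Aut(\FTS)$.

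\textbf{Where the work lies.} The only non-routine step is the bookkeeping in Step 2: the terms in $\Delta(\varphi(C)x)-\Delta(x)$ of degrees $1,2,3,4$ in $C$ must vanish separately, and each degree produces several contributions in $\alpha,\beta,A,B,A^\sharp,B^\sharp$ that cancel only after invoking a specific one of the four linearization identities. No idea beyond those identities is required, which is why this lemma is customarily stated with only a reference to the direct verification in \cite{Seligman:1962,Brown:1969}.
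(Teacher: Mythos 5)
The paper itself gives no proof of this lemma: it is imported from Seligman and Brown, where it is established by exactly the direct verification you outline (check that each generator preserves $\{\cdot,\cdot\}$ and $\Delta$, which by the definition \eqref{eq:brownfts} of $\Aut(\FTS)$ suffices). Your Step 1 is correct, including the key identity $(\tau A)^\sharp=\lambda\,{}^t\tau^{-1}A^\sharp$ and the fact that ${}^t\tau^{-1}\in\Str(\J)$ with multiplier $\lambda^{-1}$, and your Step 2 for $\varphi(C)$ is the standard bookkeeping; note only that the degree-by-degree cancellation in $\Delta(\varphi(C)x)$ needs, beyond the four identities you list, their linearized consequences of $(A^\sharp)^\sharp=N(A)A$, e.g. $(A\times C)^\sharp+A^\sharp\times C^\sharp=\Tr(A^\sharp,C)\,C+\Tr(C^\sharp,A)\,A$ and $A^\sharp\times(A\times C)=N(A)\,C+\Tr(A^\sharp,C)\,A$, so ``repeated use of the identities above'' is accurate only if linearization is understood.

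There is, however, one concrete error in your economical route to $\psi(D)$: the flip $\bigl(\begin{smallmatrix}\alpha&A\\B&\beta\end{smallmatrix}\bigr)\mapsto\bigl(\begin{smallmatrix}\beta&B\\A&\alpha\end{smallmatrix}\bigr)$ is \emph{not} an element of $\Aut(\FTS)$. It preserves $\Delta$, but it sends $\{x,y\}\mapsto-\{x,y\}$, since both $\alpha\delta-\beta\gamma$ and $\Tr(A,D)-\Tr(B,C)$ change sign, so the claim that it is ``manifestly an automorphism'' is false. The trick can be repaired in either of two ways: (i) observe that conjugation by a $\Delta$-preserving, \emph{anti}-symplectic involution still maps $\Aut(\FTS)$ into itself (the two sign flips cancel), and that the flip conjugates $\varphi(D)$ precisely into $\psi(D)$; or (ii) replace the flip by the signed map $(\alpha,A,B,\beta)\mapsto(-\beta,-B,A,\alpha)$, which does preserve both forms and conjugates $\varphi(D)$ into $\psi(-D)$ --- but its membership in $\Aut(\FTS)$ must then be checked directly, not via the paper's $\mathcal{Z}=\phi(-\mathds{1})\psi(\mathds{1})\phi(-\mathds{1})$, which is built from $\psi$ and would be circular. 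Since you also offer the symmetric direct computation for $\psi(D)$ (interchanging the two Jordan components and the roles of $\alpha,\beta$), the lemma is not endangered, but as written that side remark is incorrect.
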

 For convenience we further define $\mathcal{Z}:=\phi(-\mathds{1})\psi(\mathds{1})\phi(-\mathds{1})$,
$
\mathcal{Z}:\pmtwo{\alpha}{A}{B}{\beta}\mapsto \pmtwo{-\beta}{-B}{A}{\alpha}.
$

 \subsubsection{Rank Conditions and $\Aut(\FTS)$-Equivalence}

\begin{lemma}[Krutelevich 2004] Every non-zero element of $\FTS(\J)$, where $\J$ is one of $\JA, \JAs, \Jnt$ or  $\J_{3\R}, \J_{2\R}, \J_{\R}$, can be brought by $\Aut(\FTS)$ into the reduced form
\be\label{eq:redform}
x_{\text{red}}=\begin{pmatrix}1&A\\0&\beta\end{pmatrix}.
\ee
\end{lemma}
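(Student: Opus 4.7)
The plan is to bring $x=\pmtwo{\alpha}{A}{B}{\beta}\in \FTS(\J)$ to the desired form by applying the generators of $\Aut(\FTS)$ listed in the lemma above in three stages: first, arrange that $\alpha\neq 0$; second, rescale so that $\alpha=1$; third, annihilate $B$.

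For the first stage I would argue by cases on $(\alpha,\beta,B,A)$. If $\alpha\neq 0$ there is nothing to do. If $\alpha=0$ and $\beta\neq 0$, the transformation $\mathcal{Z}$ sends $\alpha\mapsto-\beta\neq 0$. If $\alpha=\beta=0$ and $B\neq 0$, I would invoke non-degeneracy of the trace bilinear form on $\J$ to pick $C_0\in\J$ with $\Tr(B,C_0)\neq 0$, and then apply $\varphi(tC_0)$: this shifts $\alpha$ by the polynomial $t\Tr(B,C_0)+t^2\Tr(A,C_0^\sharp)$ in $t$, which is not identically zero, so for a suitably chosen $t\in\R$ the new $\alpha$ is non-zero. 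Finally, the residual sub-case $\alpha=\beta=B=0$, $A\neq 0$ reduces to the preceding one by first applying $\mathcal{Z}$, which sends $\pmtwo{0}{A}{0}{0}$ to $\pmtwo{0}{0}{A}{0}$ and thereby places $A$ into the lower-left slot.

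For the second stage, with $\alpha\neq 0$, I would take $\tau=t\cdot\mathds{1}_\J\in\Str(\J)$, which scales the cubic norm by $t^3$, so that $\widehat{\tau}$ acts as $\alpha\mapsto t^{-3}\alpha$; choosing $t=\alpha^{1/3}$ (well-defined as a real cube root since $\alpha\in\R$) yields $\alpha=1$. For the third stage, with $\alpha=1$ in hand, I would apply $\psi(-B)$: the explicit action sends $B\mapsto B+\alpha(-B)=0$, leaves $\alpha$ fixed at $1$, and modifies $A,\beta$ to some new values $A',\beta'$ that we need not compute, since the result already has the required form $x_{\text{red}}=\pmtwo{1}{A'}{0}{\beta'}$.

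The principal obstacle is the first stage: one must guarantee that a non-zero shift of $\alpha$ can always be produced when $\alpha=\beta=0$, avoiding an accidental cancellation between the linear and quadratic contributions of $\varphi(C)$. Non-degeneracy of the trace bilinear form (built into the definition of a Jordan cubic norm) plus the polynomial-in-$t$ argument just sketched handle this cleanly, and the subsequent two stages are then direct consequences of the explicit formulas in \eqref{eq:ftstrans} together with the existence of real cube roots.
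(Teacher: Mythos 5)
Your proof is correct, but it takes a genuinely different route from the paper's. The paper handles $\JA$, $\JAs$, $\Jnt$, $\J_{3\R}$ by citing Krutelevich's argument, which hinges on those algebras being spanned by their rank-1 elements (together with non-degeneracy of the trace form), and then treats $\J_{2\R}$, $\J_{\R}$ separately: there it first arranges $\beta\not=0$ (via $\mathcal{Z}$ or a suitable $\psi(D)$), then sets $\alpha=1$ exactly by applying $\varphi(C)$ and using the fact that the real cubic $\beta C^3+3AC^2+3BC+\alpha-1$ has a real root when $\beta\not=0$, and finally applies $\psi(-B)$. You instead argue uniformly: produce $\alpha\not=0$ using $\mathcal{Z}$ together with $\varphi(tC_0)$, where non-degeneracy of $\Tr(\cdot,\cdot)$ supplies $C_0$ with $\Tr(B,C_0)\not=0$ and the shift $t\Tr(B,C_0)+t^2\Tr(A,C_0^\sharp)$ is a non-trivial polynomial in $t$ (the $\beta N(C)$ term drops since $\beta=0$ in that subcase); then normalize with the scaling $\hat\tau$, $\tau=t\,\mathds{1}$, $\lambda=t^3$, $t=\alpha^{1/3}$; then kill $B$ with $\psi(-B)$, exactly as in the paper's last step. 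Your argument dispenses with the rank-1 spanning hypothesis and the case split, covering all six algebras (including $\J_{2\R}$ and $\J_{\R}$, which are not spanned by rank-1 elements) in one stroke; its only $\R$-specific ingredient is the real cube root, which is no more restrictive than the paper's own reliance on real roots of cubics for $\J_{\R}$ (the rank-1-based route of Krutelevich is what one keeps when working over rings or fields where such roots are unavailable). All three stages check out against the explicit formulas \eqref{eq:ftstrans}: $\mathcal{Z}$ moves a non-zero $\beta$ (or, after a first application, a non-zero $A$) into position, $\hat\tau$ sends $\alpha\mapsto\lambda^{-1}\alpha$, and with $\alpha=1$ the map $\psi(-B)$ sends $B\mapsto B-\alpha B=0$ while preserving the $(1,1)$ entry.
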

\begin{proof}
The proof presented by Krutelevich \cite{Krutelevich:2004} for
$\JAs$ also holds for $\JA, \Jnt, \J_{3\R}$, since it only requires
that $\J$ is spanned by its rank 1 elements, and that  the bilinear
trace form is non-degenerate. However,  $\J_{2\R}$ and $\J_{\R}$ are
not spanned by their rank 1 elements\footnote{Indeed, $\J_{\R}$ has
no rank 1 elements.} so a gentle modification of the proof is
required. It is sufficient to show that the theorem holds for
$\J_{\R}$, so we focus on this case. Starting from an arbitrary
element
\[
\begin{pmatrix}\alpha&A\\B&\beta\end{pmatrix},
\]
we first show that one may always assume $\beta\not=0$. Assume
$\beta=0$. If $\alpha\not=0$  we simply apply
$\phi(-1)\psi(1)\phi(-1)$. Now, assume $\alpha, \beta=0$. This
implies we may assume at least one of $A$ or $B$ are non-zero.
Applying $\psi(D)$ we find
\[
\beta=0\mapsto \beta'=3D(BD+A)
\]
so that we can always pick a $D$ such that $\beta'\not=0$. Hence, we
may now assume from the outset that $\beta\not=0$. We now proceed by
illustrating that we may always assume $\alpha=1$. Assume
$\beta\not=0$ and apply
\[
\phi(C): \alpha\mapsto \beta C^3+3AC^2+3BC+\alpha.
\]
Since
\[
\beta C^3+3AC^2+3BC+\alpha-1=0
\]
has at least one real root for $\beta\not=0$ we have established
that we may assume $\alpha=1$. To finish the proof we assume
$\alpha=1$, and apply $\psi(-B)$.
\qed\end{proof}
\begin{lemma}\label{lem:rank3}  \emph{(i)} An element of $\FTS(\J)$, where $\J$ is one of $\JA, \JAs, \Jnt$, of the form
\be
\begin{pmatrix}\alpha&a_iE_i\\0&\beta\end{pmatrix}, \quad i=1,2,3,
\ee
is $\Aut(\FTS)$ related to:
\begin{enumerate}
\item
\be
\begin{pmatrix}\alpha&(a_1+\beta c-\frac{a_2a_3c^2}{\alpha})E_1+a_2E_2+a_3E_3\\0&\beta-\frac{2a_2a_3c}{\alpha}\end{pmatrix}.
\ee
\item
\be
\begin{pmatrix}\alpha&a_1E_1+(a_2+\beta c-\frac{a_1a_3c^2}{\alpha})E_2+a_3E_3\\0&\beta-\frac{2a_1a_3c}{\alpha}\end{pmatrix}.
\ee
\item
\be
\begin{pmatrix}\alpha&a_1E_1+a_2E_2+(a_3+\beta c-\frac{a_1a_2c^2}{\alpha})E_3\\0&\beta-\frac{2a_1a_2c}{\alpha}\end{pmatrix}.
\ee
\end{enumerate}
\emph{(ii)} An element of $\FTS_{2\R}$,  of the form
\be\label{eq:f2rlem27}
\begin{pmatrix}\alpha&a_iE_i\\0&\beta\end{pmatrix} \quad i=1,2,
\ee
is $\Aut(\FTS)$ related to:
\be
\begin{pmatrix}\alpha&(a_1+\beta c)E_1+a_2E_2\\0&\beta-\frac{2c(a_2)^2}{\alpha}\end{pmatrix}.
\ee
\end{lemma}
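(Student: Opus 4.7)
The plan is a direct, constructive computation using the generators $\varphi(C)$ and $\psi(D)$ of $\Aut(\FTS)$ introduced in the preceding lemma. The guiding observation is that in each stated canonical form both $\alpha$ and $B=0$ are preserved; this suggests composing $\psi(D)\circ\varphi(cE_i)$, with $D$ chosen precisely to undo the $B$-component produced by $\varphi(cE_i)$.

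For case (i.1) I would first apply $\varphi(cE_1)$. Since $E_1^\sharp=0$ and $N(E_1)=0$, this leaves $\alpha$ and $\beta$ alone, shifts $A$ by $\beta c E_1$, and produces $B=c\,A\times E_1$. The key identity $E_j\times E_k=E_\ell$ for $(jk\ell)$ a cyclic permutation of $(1,2,3)$ then yields $A\times E_1=a_3E_2+a_2E_3$. I would next apply $\psi(D)$ with $D=-B/\alpha=-(c/\alpha)(a_3E_2+a_2E_3)$, which by construction kills the new $B$. Computing $D^\sharp=(c^2a_2a_3/\alpha^2)E_1$, $N(D)=0$, $B\times D=-(2c^2a_2a_3/\alpha)E_1$, and $(A,D)=-2ca_2a_3/\alpha$ (using $\Tr(E_i,E_j)=\delta_{ij}$ on $\JA,\JAs,\Jnt$) and substituting into the $\psi(D)$ formula reproduces (i.1). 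Cases (i.2) and (i.3) follow by the same argument with $E_2$ or $E_3$ playing the distinguished role.

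Part (ii) proceeds via the same composition $\psi(D)\circ\varphi(cE_1)$ on $\FTS_{2\R}$, now using the peculiar idempotent calculus of $\J_{2\R}$: $E_1^\sharp=0$ still holds but $E_2^\sharp=-E_2$, and $E_1\times E_2=E_1+E_2$ (rather than producing a ``third'' idempotent), while the trace bilinear form is non-diagonal on $\{E_1,E_2\}$. Inserting these identities into the same $\psi\circ\varphi$ scheme delivers the stated shift of $\beta$ by $-2ca_2^2/\alpha$ and the claimed modification of the $E_1$ component of $A$.

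The main obstacle is purely bookkeeping: one must carefully evaluate the Freudenthal products $A\times E_i$, the sharps $D^\sharp$ and $B\times D$, and the trace pairings in each underlying Jordan algebra. Nothing conceptual is required beyond the generators $\varphi,\psi$ and the basic cubic Jordan identities; the $\J_{2\R}$ case is marginally more delicate because its irreducible idempotents fail to behave as those of the magic and higher-dimensional Lorentzian families.
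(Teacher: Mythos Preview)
Your proposal is correct and follows essentially the same route as the paper: for part (i) the paper simply appeals to Krutelevich's argument, which is exactly the composition $\psi(D)\circ\varphi(cE_i)$ with $D=-B/\alpha$ that you spell out, and for part (ii) the paper explicitly applies $\psi(D)\circ\varphi(C)$ with $C=(c;0)=cE_1$ and $D=-\tfrac{1}{\alpha}(0;ca_2)$, which coincides with your choice $D=-B/\alpha$. Your write-up in fact supplies the intermediate identities ($E_j\times E_k=E_\ell$, $E_i^\sharp=0$, $\Tr(E_i,E_j)=\delta_{ij}$ on $\Jnt$, and the special $\J_{2\R}$ relations) that the paper leaves implicit.
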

\begin{proof}
(i) The proof presented by Krutelevich \cite{Krutelevich:2004} for $\JA, \JAs$   may be seen to hold for $\Jnt$ by direct calculation. (ii) Follows by acting on \eqref{eq:f2rlem27} with
\be
\psi(D)\circ\phi(C),
\ee
where $C=(c;0)$ and $D=-\frac{1}{\alpha}(0; ca_2)$.
\qed\end{proof}
Following \cite{Krutelevich:2004} one may generalise the
conventional matrix rank to the FTS.
\begin{definition}[FTS ranks] An FTS element may be assigned an $\Aut(\FTS)$ invariant \emph{rank}:
\be\label{eq:FTSranks}
\begin{split}
\textrm{\emph{Rank}} x =1& \Leftrightarrow \Upsilon(x,x,y)=0\; \forall y,\; x\not=0;\\
\textrm{\emph{Rank}} x =2& \Leftrightarrow T(x)=0,\;\exists y \;\text{s.t.}\;\Upsilon(x,x,y)\not=0;\\
\textrm{\emph{Rank}} x =3& \Leftrightarrow \Delta(x)=0,\;T(x)\not=0;\\
\textrm{\emph{Rank}} x =4& \Leftrightarrow \Delta(x)\not=0,\\
\end{split}
\ee
where we have defined $\Upsilon(x,x,y):=3T(x,x,y)+\{x, y\}x$.
\end{definition}
\begin{remark}[Reduced rank conditions]  For an element in the reduced form \eqref{eq:redform} the rank conditions simplify:
\be
\begin{split}
\textrm{\emph{Rank}} x =1& \Leftrightarrow A=0,\; \beta=0;\\
\textrm{\emph{Rank}} x =2&\Leftrightarrow A^\sharp=0,\;\beta=0,\;A\not=0;\\
\textrm{\emph{Rank}} x =3& \Leftrightarrow 4N(A)=-\beta^2,\;A^\sharp\not=0;\\
\textrm{\emph{Rank}} x =4& \Leftrightarrow 4N(A)\not=-\beta^2.\\
\end{split}
\ee
\end{remark}
In order to distinguish orbits of the same rank we will use the following quadratic form introduced in \cite{Shukuzawa:2006}.
\begin{definition}[FTS quadratic form] Define, for a non-zero constant element $y\in\FTS$, the real quadratic form $B_y$,
\be
 B_y(x):=\{(x\wedge x)y, y\}, \qquad x \in \FTS.
\ee
\end{definition}
\begin{lemma}[Shukuzawa, 2006 \cite{Shukuzawa:2006}] If $y'=\sigma y$ for $y\not=0$ and $\sigma\in \Aut(\FTS)$, then
\[
B_y(x)=B_{y'}(x'),\quad\textrm{where}\quad x'=\sigma x \in \FTS.
\]
\end{lemma}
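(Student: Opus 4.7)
The plan is to reduce the statement to a direct substitution using the two characterisations of $\Aut(\FTS)$ established earlier in the paper. Recall the lemma proven above states that $\sigma \in \Aut(\FTS)$ if and only if $\{\sigma x, \sigma y\} = \{x, y\}$ and $\sigma(x\wedge y)\sigma^{-1} = \sigma x \wedge \sigma y$. Both of these equivariance properties will be used; the second is the crucial one, because $B_y(x)$ is built from the Freudenthal product $x\wedge x$ applied as an endomorphism to $y$, followed by pairing with $y$ under the symplectic form.

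Concretely, I would start from the definition
\[
B_{y'}(x') = \{(x'\wedge x')\,y',\; y'\},
\]
substitute $x' = \sigma x$ and $y' = \sigma y$, and apply covariance of the Freudenthal product to rewrite
\[
(\sigma x \wedge \sigma x)\,\sigma y = \sigma(x\wedge x)\sigma^{-1}\,\sigma y = \sigma\bigl((x\wedge x)y\bigr).
\]
Then invariance of $\{\cdot,\cdot\}$ under $\sigma$ gives
\[
\{\sigma((x\wedge x)y),\; \sigma y\} = \{(x\wedge x)y,\; y\} = B_y(x),
\]
which is the desired equality.

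There is no real obstacle here since both ingredients have already been proven; the work is entirely in identifying that $B_y(x)$ has been defined in exactly the form that transforms covariantly under the two properties characterising $\Aut(\FTS)$. The only subtle point worth noting is that $\sigma$ must be treated as an invertible $\R$-linear endomorphism of $\FTS$ so that $\sigma^{-1}$ exists on the right-hand side of the covariance identity; this is guaranteed by the definition of $\Aut(\FTS) \subset \Iso_\R(\FTS)$. Consequently, the proof fits in three lines and requires no additional identities beyond those already established.
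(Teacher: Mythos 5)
Your argument is correct: the paper itself gives no proof of this lemma (it is simply quoted from Shukuzawa), but your three-line derivation is the natural one and is fully justified by results already established in the text, namely the lemma showing that every $\sigma\in\Aut(\FTS)$ satisfies both $\{\sigma x,\sigma y\}=\{x,y\}$ and $\sigma(x\wedge y)\sigma^{-1}=\sigma x\wedge\sigma y$. Substituting $x'=\sigma x$, $y'=\sigma y$ into $B_{y'}(x')=\{(x'\wedge x')y',y'\}$, using the second property to get $(\sigma x\wedge\sigma x)\sigma y=\sigma\bigl((x\wedge x)y\bigr)$, and then the first to strip off $\sigma$, indeed yields $B_{y'}(x')=B_y(x)$; invertibility of $\sigma$ is guaranteed since $\Aut(\FTS)\subset\Iso_\R(\FTS)$, as you note.
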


\subsection{Explicit Orbit Representatives of Reducible FTS $\Fnt$}\label{sec:Spin-Factors}

In this section we obtain the explicit orbit representatives of the reducible FTS defined over the  Lorentzian spin factor Jordan algebras. These FTS have previously appeared in physics literature  \cite{Gunaydin:2005zz,Gunaydin:2009dq}, where, in particular,   quasiconformal realisations over the FTS where used to capture the three dimensional U-duality groups as spectrum generating quasiconformal groups.

\begin{theorem}\label{thm:redcanforms} Every element $x\in\Fnt$ with $n\geq 2$ of a given rank is $\SL(2, \R)\times\SO(2,n)$ related to one of the following canonical forms:
\begin{enumerate}
\item Rank 1
\begin{enumerate}
\item  $x_{1}=\begin{pmatrix}1&0\\0&0\end{pmatrix}$
\end{enumerate}
\item Rank 2
\begin{enumerate}
\item  $x_{2a}=\begin{pmatrix}1&(1;0,0)\\0&0\end{pmatrix}$
\item  $x_{2b}=\begin{pmatrix}1&(-1;0,0)\\0&0\end{pmatrix}$
\item  $x_{2c}=\begin{pmatrix}1&(0;\half,\half)\\0&0\end{pmatrix}$
\end{enumerate}
\item Rank 3
\begin{enumerate}
\item  $x_{3a}=\begin{pmatrix}1&(0;1,0)\\0&0\end{pmatrix}$
\item  $x_{3b}=\begin{pmatrix}1&(0;0,1)\\0&0\end{pmatrix}$
\end{enumerate}
\item Rank 4
\begin{enumerate}
\item   $x_{4a}=k\begin{pmatrix}1& (-1;1,0)\\0&0\end{pmatrix}$
\item   $x_{4b}=k\begin{pmatrix}1&(1;0,1)\\0&0\end{pmatrix}$
\item   $x_{4c}=k\begin{pmatrix}1&(-1;0,1)\\0&0\end{pmatrix},$
\end{enumerate}
where $k>0$.
\end{enumerate}
\end{theorem}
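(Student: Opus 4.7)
The approach follows the structure of Shukuzawa's proof of \autoref{thm:Shuk} for the magic FTS, adapted to the reducible Lorentzian spin factor $\Jnt=\R\oplus\Gamma_{1,n-1}$ whose reduced structure group $\SO(1,1)\times\SO(1,n-1)$ was determined in \autoref{thm:d5red}. First I would invoke Krutelevich's reduction lemma (the lemma stated immediately before \autoref{lem:rank3}) to bring any non-zero $x\in\Fnt$ into the form $x_{\mathrm{red}}=\begin{pmatrix}1&A\\0&\beta\end{pmatrix}$; on this form the rank conditions \eqref{eq:FTSranks} simplify as in the subsequent remark, and the argument proceeds by case split on the rank.

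For rank $1$ one has $A=0$ and $\beta=0$, giving $x_{1}$ immediately. For rank $2$, $A$ is a non-zero rank-$1$ element of $\Jnt$ with $\beta=0$; by \autoref{thm:d5red} it is $\SO(1,1)\times\SO(1,n-1)$-equivalent to one of $E_{1},-E_{1},E_{2}$, and these three $\Str_{0}(\Jnt)$ orbits are realised inside $\Aut(\Fnt)$ via the $\widehat{\tau}$ transformation of \eqref{eq:ftstrans}, producing $x_{2a},x_{2b},x_{2c}$. Pairwise inequivalence is then checked by evaluating the Shukuzawa quadratic form $B_{y}$ with a suitable choice of $y$.

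For rank $3$ we have $A^{\sharp}\neq 0$ and $4N(A)=-\beta^{2}$. If $\beta\neq 0$ then $A$ has Jordan rank $3$ and \autoref{thm:d5red} brings it to $A=(\pm 1;\tfrac{1}{2}(1+k),\tfrac{1}{2}(1-k))$; applying option~1 of \autoref{lem:rank3}(i) with parameter $c=\beta/(2k)$ simultaneously annihilates the $\R$-component of $A$ and the entry $\beta$, leaving $A=(0;\tfrac{1}{2}(1+k),\tfrac{1}{2}(1-k))$ with $a_{\mu}a^{\mu}=k$. If instead $\beta=0$ then $A$ has Jordan rank $2$: the orbits $A_{2a},A_{2b}$ of \autoref{thm:d5red} deliver $x_{3a},x_{3b}$ on the nose, while $A_{2c},A_{2d}$ are first pushed into the previous sub-case by option~3 of \autoref{lem:rank3}(i), which turns on a non-zero $\beta$ and promotes $A$ to Jordan rank $3$. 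In all sub-cases the Lorentzian sign of $a_{\mu}a^{\mu}$ is the $\SO(1,n-1)$-invariant that separates $x_{3a}$ (timelike) from $x_{3b}$ (spacelike), and a final rescaling by the $\SO(1,1)$ factor of $\Str_{0}(\Jnt)$ puts the answer into the stated canonical form.

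For rank $4$, $\Delta(x)=-\beta^{2}-4N(A)\neq 0$ in reduced form, and its sign is an $\Aut(\Fnt)$-invariant. A direct evaluation on the listed representatives yields $\Delta(x_{4a})=\Delta(x_{4b})=4k^{4}>0$ and $\Delta(x_{4c})=-4k^{4}<0$, so the $\Delta<0$ stratum must coincide with the single orbit through $x_{4c}$. Within each sign of $\Delta$ the reduction is as in rank $3$: bring $A$ into $\Str_{0}$-canonical form via \autoref{thm:d5red}, then apply \autoref{lem:rank3} to fit the result into one of the three listed shapes, the overall scale $k>0$ being fixed by the $\SO(1,1)$ rescaling. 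The main obstacle is the $\Delta>0$ stratum, where $\Delta$ alone does not separate $x_{4a}$ from $x_{4b}$: a secondary invariant is required, and the plan is to evaluate $B_{y}$ with $y$ chosen so that it detects the sign of $a_{\mu}a^{\mu}$ (timelike for $x_{4a}$, spacelike for $x_{4b}$) relative to the sign of the $\R$-component $a$, mirroring the role of the analogous discriminating invariant in Shukuzawa's treatment of $\FA$.
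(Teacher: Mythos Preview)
Your overall scaffolding mirrors the paper's, and your rank-3 reduction via option~1 of \autoref{lem:rank3}(i)---exploiting $4N(A)=-\beta^{2}$ to kill the $E_{1}$-component and $\beta$ simultaneously---is a clean alternative to the paper's route. The paper instead kills the $E_{3}$-component, lands on the four rank-2 Jordan representatives $A_{2a},\ldots,A_{2d}$, and then explicitly merges $A_{2a}\sim A_{2d}$ and $A_{2b}\sim A_{2c}$ via a $\varphi(\tilde{C})\psi(D)\varphi(C)$ transformation with rank-1 $C,D,\tilde{C}$. Both paths establish that every rank-3 element reaches one of $x_{3a},x_{3b}$.

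The genuine gap is in the \emph{inequivalence} of $x_{3a}$ and $x_{3b}$. You invoke ``the Lorentzian sign of $a_{\mu}a^{\mu}$'', explicitly labelling it an $\SO(1,n-1)$-invariant---but $\SO(1,n-1)\subset\Str_{0}(\Jnt)$ is a proper subgroup of $\Aut(\Fnt)=\SL(2,\R)\times\SO(2,n)$. The number $a_{\mu}a^{\mu}$ is not even defined on a generic FTS element; it appears only after your particular reduction, and nothing in your argument prevents a different $\Aut(\Fnt)$-path to reduced form from flipping its sign. You correctly reach for $B_{y}$ at ranks~2 and~4: you need it here too. The paper writes out $B_{x_{3a}}(y)$ and $B_{x_{3b}}(y)$ as explicit quadratic forms in the coordinates of a generic $y$, diagonalises, and applies Sylvester's law of inertia to conclude the signatures differ.

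A parallel gap sits in rank~4. The line ``so the $\Delta<0$ stratum must coincide with the single orbit through $x_{4c}$'' presupposes the conclusion. After setting $\beta=0$ with $A$ of Jordan rank~3 and applying \autoref{thm:d5red}, one obtains \emph{two} $\Str_{0}$-inequivalent shapes (from $A_{3a}$ and $A_{3b}$) for \emph{each} sign of $\Delta$. When $\Delta>0$ the paper checks via $B_{y}$ that the signatures differ; when $\Delta<0$ the two shapes must be shown to \emph{merge}, and the paper does this with the same $\varphi(\tilde{C})\psi(D)\varphi(C)$ machinery that your rank-3 shortcut let you bypass. Your phrase ``apply \autoref{lem:rank3} to fit the result into one of the three listed shapes'' does not address this: \autoref{lem:rank3} together with $\widehat{\tau}$ moves among diagonal reduced forms without any evident mechanism for passing between the two $\Str_{0}$-orbits of rank-3 $A$ at fixed $N(A)$.
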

\begin{proof}
We begin by transforming to the generic canonical form \eqref{eq:redform}
and then we proceed case by case, according to the rank.

\paragraph{Rank 1:}
$ \text{Rank} x=1\Rightarrow A=0,\beta=0, $ so that every rank 1
element is $\Aut(\Fnt)$ related to \be
x_1=\begin{pmatrix}1&0\\0&0\end{pmatrix}. \ee

\paragraph{Rank 2:}
$ \text{Rank} x=2\Rightarrow A^\sharp=0,\beta=0, A\not=0, $ so that
every rank 2 element is $\Aut(\Fnt)$ related to \be
x=\begin{pmatrix}1&A\\0&0\end{pmatrix}, \ee where $A$ is a rank 1
Jordan algebra element. $A$ may be brought into  canonical form via
$\hat{\tau}$, where $\tau\in\Str_0(\Jn)$,
\[
 \widehat{\tau}:\pmtwo{1}{A}{0}{0}\mapsto \pmtwo{1}{\tau A}{0}{0}.
 \]
 Hence, $x$ may be brought into one of three forms corresponding to the three rank 1 representatives for
 $A$, namely:
\be x_{2a}=\begin{pmatrix}1&A_{1a}\\0&0\end{pmatrix};\qquad
x_{2b}=\begin{pmatrix}1&A_{1b}\\0&0\end{pmatrix};\qquad
x_{2c}=\begin{pmatrix}1&A_{1c}\\0&0\end{pmatrix}. \ee These are in
fact unrelated, as it can be seen by computing the quadratic forms
\be\label{eq:signature1}
\begin{split}
B_{x_{2a}}(y)&=-c_\mu c^\mu+\gamma d;\\
B_{x_{2b}}(y)&=\phantom{-}c_\mu c^\mu-\gamma d;\\
B_{x_{2c}}(y)&=-cc_0-c c_1+\gamma d_0+\gamma d_1,
\end{split}
\ee where \be y=\begin{pmatrix}\delta&C\\D&\gamma\end{pmatrix}. \ee
By diagonalising \eqref{eq:signature1}, one can verify that the
three forms have distinct signatures; hence, by Sylvester's Law of
Inertia, $x_{2a}, x_{2b}$ and $x_{2c}$ lie in distinct orbits.

\paragraph{Rank 3:}
$ \text{Rank} x=3\Rightarrow N(A)=-\frac{\beta^2}{4},
A^\sharp\not=0. $ If $\beta\not=0$ then $A$ is $\Str_0(\Jn)$ related
to \be (\pm1; \frac{1}{2}(1\mp\frac{\beta^2}{4}),
\frac{1}{2}(1\pm\frac{\beta^2}{4}), 0,\ldots)=\pm
E_1+E_2\mp\frac{\beta^2}{4}E_3, \ee so that, by an application of
$\hat{\tau}$, one obtains \be x=\begin{pmatrix}1&\pm
E_1+E_2\mp\frac{\beta^2}{4}E_3\\0&\beta\end{pmatrix}. \ee Then, by
Lemma \ref{lem:rank3} with $c=\mp\frac{\beta^2}{4}$, $x$ may be
brought into the form, \be
\begin{pmatrix}1&\pm E_1+E_2\\0&0\end{pmatrix}.
\ee Hence, we may assume from the out set that \be
x=\begin{pmatrix}1&A\\0&0\end{pmatrix} \ee where $A$ is a rank 2
Jordan algebra element. Via an application of $\hat{\tau}$, where
$\tau\in\Str_0(\Jn)$, $x$ may be brought into one of four forms
corresponding to the four rank 2 canonical forms for $A$, namely:
\be x_{3a}=\begin{pmatrix}1&A_{2a}\\0&0\end{pmatrix};\qquad
x_{3b}=\begin{pmatrix}1&A_{2b}\\0&0\end{pmatrix};\qquad
x_{3c}=\begin{pmatrix}1&A_{2c}\\0&0\end{pmatrix};\qquad
x_{3d}=\begin{pmatrix}1&A_{2d}\\0&0\end{pmatrix}. \ee We are now
able to show $x_{3a}$ and $x_{3b}$ are $\Aut(\Fnt)$ related to
$x_{3d}$ and $x_{3c}$ respectively. The proof proceeds by an
application of $ \varphi(\tilde{C})\psi(D)\varphi(C) $ with
$\tilde{C}^\sharp=D^\sharp=C^\sharp=0$, which yields,
\[
\begin{pmatrix}1&A_{2a}\\0&0\end{pmatrix}\mapsto \begin{pmatrix}1+(A_{2a}\times C+D,\tilde{C})&A_{2a}+(A_{2a}\times C)\times D+(A_{2a}, D)\tilde{C}\\A_{2a}\times C+D+(A_{2a}+(A_{2a}\times C)\times D)\times \tilde{C}&(A_{2a}, D)\end{pmatrix}.
\]
Assuming  $(A_{2a}, D)=1$ and $\tilde{C}=-(A_{2a}+(A_{2a}\times
C)\times D)$, one obtains \be
\begin{pmatrix}1-(A_{2a}\times C+D,A_{2a}+(A_{2a}\times C)\times D)&0\\A_{2a}\times C+D&1\end{pmatrix}.
\ee This is achieved by the choice $C=(0;-\half,-\half,0,\ldots)$
and $D=(0;\half,\half,0,\ldots)$. One finds
$\tilde{C}=(0;-\half,-\half,0,\ldots)$ and $(A_{2a}\times
C+D,\tilde{C})=-1$, yielding
\[
\begin{pmatrix}0&0\\(-1;\half,\half,0,\ldots)&1\end{pmatrix},
\]
from which, after three applications of
$\varphi(-\mathds{1})\psi(\mathds{1})\varphi(-\mathds{1})$,  the desired form \be
\begin{pmatrix}1&A_{2d}\\0&0\end{pmatrix},
\ee follows. Similary, it can be proved that $x_{2b}$ is $\Aut(\Fnt)$ related
to $x_{2c}$.

The remaining two possiblilities are unrelated, as it can be seen by
computing the quadratic forms \be\label{eq:signature2}
\begin{split}
B_{x_{3c}}(y)&=-\delta c_0-\delta c_1-c c_0-c c_1-c_\mu c^\mu+\gamma d+\gamma d_0+\gamma d_1+d d_0+d d_1;\\
B_{x_{3d}}(y)&=\phantom{-}\delta c_0+\delta c_1-c c_0-c c_1+c_\mu c^\mu-\gamma d+\gamma d_0+\gamma d_1-d d_0-d d_1.\\
\end{split}
\ee By diagonalising \eqref{eq:signature2}, one can verify that the
two forms have distinct signatures; hence, by Sylvester's Law of
Inertia, $x_{2a}$ and $x_{2b}$ lie in distinct orbits.

\paragraph{Rank 4:}
$ \text{Rank} x=4\Rightarrow
\Delta(x)=-N(A)-\frac{\beta^2}{4}\not=0. $ By Lemma \ref{lem:rank3},
we may assume from the out set that \be
x=\begin{pmatrix}1&A\\0&0\end{pmatrix} \ee where $A$ is a rank 3
Jordan algebra element. Via an application of $\hat{\tau}$, where
$\tau\in\Str_0(\Jn)$, $x$ may be brought into one of two forms
corresponding to the two rank 3 canonical forms for $A$, namely: \be
x_{4a}=\begin{pmatrix}1&(1;\half(1-m),\half(1+m),0,\ldots)\\0&0\end{pmatrix},\qquad
x_{4b}=\begin{pmatrix}1&(-1;\half(1+m),\half(1-m),0,\ldots)\\0&0\end{pmatrix},
\ee where conventions have been chosen such that
$\Delta(x_{4a})=\Delta(x_{4b})=m$.

In order to determine under what conditions $x_{4a}$ and $x_{4b}$
are related, use the  quadratic forms \be\label{eq:signature3}
\begin{split}
B_{x_{4a}}(y)=&\phantom{-}\delta m c - \delta c_{0} + \delta m c_{0} - c c_{0} + m c c_{0}  - \delta c_{1} - \delta m c_{1} - c c_{1} -
 m c c_{1} - c_{\mu}c^{\mu} \\
 &+ \gamma d + \gamma d_{0} - \gamma m d_{0} + d d_{0} - m d d_{0} - m d_{\mu}d^{\mu} +
  \gamma d_{1} + \gamma m d_{1} + d d_{1} + m d d_{1};\\
B_{x_{4b}}(y)=&-\delta m c + \delta c_{0} + \delta m c_{0} - c c_{0} - m c c_{0}  + \delta c_{1} - \delta m c_{1} - c c_{1} +
 m c c_{1} - c_{\mu}c^{\mu} \\
 &- \gamma d + \gamma d_{0} + \gamma m d_{0} - d d_{0} - m d d_{0} - m d_{\mu}d^{\mu} +
  \gamma d_{1} - \gamma m d_{1} - d d_{1} + m d d_{1}.\\
\end{split}
\ee is made once again.

The diagonalisation of Eq. \eqref{eq:signature3} leads to quite
complicated expressions for the two metrics. However, one can show
that they only differ in three components, namely
$(1,-\frac{m}{2},m)$ \textit{versus} $(-1,-m,-\frac{m}{2})$; hence,
one can conclude that for $m>0$ the metrics have different
signatures. Consequently, for $m>0$, $x_{4a}$ and $x_{4b}$ lie in
distinct orbits by Sylvester's Law of Inertia. On the other hand,
for $m<0$, the signatures match and, by using a similar argument to
the one used in the rank 3 case, that is applying $
\varphi(\tilde{C})\psi(D)\varphi(C) $ such that
$\tilde{C}^\sharp=D^\sharp=C^\sharp=0$, one can indeed verify they
are both indeed related to the canonical form $x_{4c}$ of the
theorem.
\qed\end{proof}

Note, the $\Fnt$ case considered here may be generalised to an
arbitrary  pseudo-Euclidean signature $\FTS^{p,q}:=\FTS(\J_{p-1,
q-1})$, where $\J_{p-1, q-1}=\R\oplus\Gamma_{p-1, q-1}$ was
introduced in \eqref{eq:n4gen}. The automorphism group is given by,
\be \Aut(\FTS^{p,q})=\SL(2, \R)\times\SO(p,q). \ee In particular,
$\mathcal{N}=4$ Maxwell-Einstein supergravity has an $\SL(2,
\R)\times\SO(6,q)$ U-duality and is related to
$\FTS^{6,q}:=\FTS(\J_{5, q-1})$. The analysis goes through
analogously  so we will not treat it in detail here. See, for
example, \cite{Krutelevich:2004,Gunaydin:2009zza,ICL-2} for further
details.

\subsubsection{Special Cases: $\FTS_{3\R}, \FTS_{2\R}$ and $\FTS_{\R}$}\label{sec:d4special}

\paragraph{Case 1: $\FTS(\J_{3\R})$} This is the $n=2$ point of the generic sequence $\Fnt$, as presented in \autoref{thm:redcanforms}. However, as mentioned in \autoref{sec:d5special}, the underlying Jordan algebra $\J_{1,1}=\R\oplus\Gamma_{1,1}$ may be reformulated in particularly symmetric manner as $\J_{3\R}=\R\oplus\R\oplus\R$, where $N(A)=a_1a_2a_3$ for $ (a_1, a_2, a_3)\in \J_{3\R}$. The permutation symmetry of the cubic norm is further manifested in the corresponding FTS, $\FTS^{2,2}\cong\FTS_{3\R}$. The elements of $\R\oplus\R\oplus\J_{3\R}\oplus\J_{3\R}$ may be written as a $2\times 2\times2$ \emph{hypermatrix}, denoted $a_{ABC}$:
\be
x=\begin{pmatrix}a_{000}&A=(a_{011}, a_{101}, a_{110})\\B=(a_{100}, a_{010},
a_{001})&a_{111}\end{pmatrix}\mapsto a_{ABC}, \quad \text{where} \quad A,B,C=0,1.
\ee
The permutation symmetry of the cubic norm corresponds to  its invariance under $A\leftrightarrow B\leftrightarrow C$. The hypermatrix lies in the fundamental representation $V_A\otimes V_B\otimes V_C$, where $V_i$ is a 2-dimensional real vector space,  of the automorphism group $\SL(2, \R)\times \SO(2,2)\cong \SL_A(2, \R)\times \SL_B(2, \R) \times \SL_C(2, \R)$. Explicitly,
\be
a_{ABC}\mapsto \tilde{a}_{ABC}=M_{A}{}^{A'}N_{B}{}^{B'}P_{C}{}^{C'}a_{A'B'C'},
\ee
where $M, N, P$ are $2\times 2$ matrices with determinant 1. The quartic norm is given by
Cayley's \emph{hyperdeterminant} $\Det a_{ABC}$ \cite{Cayley:1845},
\be \Delta(x)=-\Det
a=\frac{1}{2}\epsilon^{A_1A_2}\epsilon^{B_1B_2}\epsilon^{C_1C_3}\epsilon^{A_3A_4}\epsilon^{B_3B_4}\epsilon^{C_2C_4}a_{A_1B_1C_1}a_{A_2B_2C_2}a_{A_3B_3C_3}a_{A_4B_4C_4},
\ee
where $\epsilon$ is the antisymmetric $2\times 2$ invariant tensor of $\SL(2)$. This form of the quartic norm makes the $A\leftrightarrow B\leftrightarrow C$ triality invariance manifest.

\paragraph{Case 2: $\FTS(\J_{2\R})$} This is the $n=1$ point of the generic sequence $\Fnt$. However, since the underlying Jordan algebra $\J_{1,0}=\R\oplus\Gamma_{1,0}$ is Euclidean  the orbits of \autoref{thm:redcanforms} containing light-like $a_\mu\in\Gamma_{1,n-1}$ cannot be present. Indeed, we have the following result:
\begin{theorem}\label{thm:st2canform} Every element $x\in \FTS_{2\R}$ of a given rank is $\SL(2, \R)\times\SO(2, 1)$ related to one of the following canonical forms:
\begin{enumerate}
\item Rank 1
\begin{enumerate}
\item  $x_{1}=\begin{pmatrix}1&0\\0&0\end{pmatrix}$
\end{enumerate}
\item Rank 2
\begin{enumerate}
\item  $x_{2a}=\begin{pmatrix}1&(1;0)\\0&0\end{pmatrix}$
\item  $x_{2b}=\begin{pmatrix}1&(-1;0)\\0&0\end{pmatrix}$
\end{enumerate}
\item Rank 3
\begin{enumerate}
\item  $x_{3a}=\begin{pmatrix}1&(0;1)\\0&0\end{pmatrix}$
\end{enumerate}
\item Rank 4
\begin{enumerate}
\item   $x_{4a}=k\begin{pmatrix}1& (-1;1)\\0&0\end{pmatrix}$
\item   $x_{4b}=k\begin{pmatrix}1& (1;1)\\0&0\end{pmatrix},$
\end{enumerate}
where $k>0$.
\end{enumerate}
\end{theorem}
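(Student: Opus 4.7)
The plan is to follow the template of the proof of \autoref{thm:redcanforms}, adapted to the one-dimensional Euclidean factor $\Gamma_{1,0}$ underlying $\J_{2\R}$. I first invoke the Krutelevich reduction lemma, whose extension to $\J_{2\R}$ was established earlier, to bring every element into the reduced form $\pmtwo{1}{A}{0}{\beta}$ with $A=(a;a_0)\in\J_{2\R}$. I then work case by case on the $\Aut(\FTS)$-invariant rank, using the decomposition $A=(a+a_0)E_1+a_0E_2$ with $E_1=(1;0)$ and $E_2=(-1;1)$.

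Ranks 1 and 2 are short. The rank 1 reduced-form conditions collapse to $A=0,\beta=0$, giving $x_1$. For rank 2 the adjoint $A^\sharp=(a_0^2;aa_0)$ vanishes iff $a_0=0$, so $A=(a;0)$ with $a\neq 0$; a diagonal $\hat\tau$ normalises $A$ to $\pm E_1$, producing the two candidates $x_{2a},x_{2b}$. To show these are inequivalent I will compute the Shukuzawa quadratic form $B_y$ on each and observe that the two forms diagonalise to opposite signatures, so Sylvester's law of inertia separates them.

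For rank 3 the constraint $4N(A)=-\beta^2$ together with $A^\sharp\neq 0$ splits into two subcases. When $\beta\neq 0$, the relation forces $a<0$ and $a_0\neq 0$, so an application of \autoref{lem:rank3}(ii) with $c=\beta/(2a_0^2)$ zeros $\beta$, and using $a=-\beta^2/(4a_0^2)$ the new $A$ reduces to $(0;a_0)$. When $\beta=0$, the rank 2 Jordan condition already forces $A=(0;a_0)$ with $a_0\neq 0$. In both subcases, a $\Str_0(\J_{2\R})$ scaling together with the discrete $\Z_2$ sign flip brings $A$ to $(0;1)$, producing the unique canonical form $x_{3a}$.

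Rank 4 is the main difficulty. In the easy subcase $a_0\neq 0$, \autoref{lem:rank3}(ii) with $c=\beta/(2a_0^2)$ again zeros $\beta$, and the resulting $A'$ satisfies $N(A')=N(A)+\beta^2/4\neq 0$ by the rank 4 hypothesis, so $A'$ is rank 3 in $\J_{2\R}$. In the degenerate subcase $a_0=0$, rank 4 forces $\beta\neq 0$; I will introduce a nonzero $a_0$ component by chaining $\mathcal{Z}$, a $\varphi(C)$ with $C=(c_1;c_0)$ and $c_0\neq 0$ chosen to reset $\alpha=1$, and a $\psi(D)$ cancelling the intermediate $B$. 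A short calculation shows that the new $a_0$ component equals $-\beta c_0\neq 0$, returning the problem to the easy subcase. Once in the form $\pmtwo{1}{A'}{0}{0}$ with $A'$ rank 3, a diagonal $\hat\tau$ for $\tau\in\Str(\J_{2\R})$ of non-unit determinant absorbs the continuous invariant and maps the element to $k\pmtwo{1}{(\epsilon;1)}{0}{0}$ with $\epsilon=\pm 1$ and $k=(-\epsilon\Delta(x)/4)^{1/4}>0$. The two candidates $x_{4a}$ and $x_{4b}$ are then inequivalent because $\Delta$ is an $\Aut(\FTS)$-invariant of opposite signs on the two. I expect the hardest step to be the $a_0=0$ reduction, since $\hat\tau$ alone cannot introduce a nonvanishing $a_0$ and one must use the $\varphi,\psi,\mathcal{Z}$ generators explicitly; all other steps follow the Lorentzian template with only minor arithmetic.
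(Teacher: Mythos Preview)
Your proposal is correct and follows essentially the same approach as the paper, which simply declares the result a simplification of \autoref{thm:redcanforms} and notes that the light-like orbits $x_{2c},x_{3b},x_{4c}$ disappear because $\Gamma_{1,0}$ is Euclidean. You in fact supply more detail than the paper does, particularly the rank~4 subcase $a_0=0$ where \autoref{lem:rank3}(ii) alone cannot kill $\beta$ and an explicit $\mathcal{Z},\varphi,\psi$ chain is needed---this step is glossed over in the paper's one-line proof but is genuinely required, and your computation that the new $a_0$-component equals $-\beta c_0$ is correct.
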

\begin{proof}
Since this is essentially a simplification of \autoref{thm:redcanforms} we will not present the details here. The key observation is that, since every rank 1, 2 and 3 element  of $\J_{1,0}$ is respectively of the form $(a;0), (0; a_0)$ and $(a; a_0)$, where $a, a_0\not=0$, the  $x_{2c}, x_{3b}$ and $x_{4c}$ orbits of \autoref{thm:redcanforms} are absent.
\qed\end{proof}
The underlying Jordan algebra $\J_{1,0}=\R\oplus\Gamma_{1,0}$ may be
written as a degeneration $\J_{3\R}\rightarrow\J_{2\R}=\R\oplus\R$.
At the level of the  FTS $\FTS_{3\R}\rightarrow\FTS_{2\R}$, this
corresponds to symmetrizing the  $2\times 2\times2$ hypermatrix of
$\FTS_{3\R}$ over two  indices: $a_{ABC}\rightarrow a_{A(B_1B_2)}$.
The  partially symmetrized  hypermatrix lies in the   $V_A\otimes
\text{Sym}^2 (V_B)$ representation  of the automorphism group
$\SL(2, \R)\times \SO(2,1)\cong \SL_A(2, \R)\times \SL_B(2, \R)$.
Explicitly, \be a_{A(B_1B_2)}\mapsto
\tilde{a}_{A(B_1B_2)}=M_{A}{}^{A'}N_{B_1}{}^{B'_1}N_{B_2}{}^{B'_2}a_{A'(B'_1B'_2)},
\ee where $M, N$ are $2\times 2$ matrices with determinant 1. The
quartic norm is again given by the hyperdeterminant through applying
the appropriate symmetrization to $\Det a_{ABC}$. For more details
see, for example,
\cite{Bhargava:2004,Krutelevich:2004,Bellucci:2007zi}.

\paragraph{Case 3: $\FTS(\J_{\R})$}  May be regarded as the end point of this sequence, in the sense that $\FTS_{\R}=\R\oplus\R\oplus\J_{\R}\oplus\J_{\R}$  can  be mapped to the space of totally symmetrized hypermatrices: $a_{(A_1A_2A_3)}\in \text{Sym}^3(V_A)$. The  totally symmetrized  hypermatrix transforms as
\be
a_{(A_1A_2A_3)}\mapsto \tilde{a}_{(A_1A_2A_3)}=M_{A_1}{}^{A'_1}M_{A_2}{}^{A'_2}M_{A_3}{}^{A'_3}a_{(A'_1A'_2A'_3)},
\ee
where $M$ is a $2\times 2$ determinant 1 matrix, under the automorphism group $\SL_A(2, \R)$. Once again the quartic norm is  given by
the hyperdeterminant by totally ``symmetrizing'' $\Det a_{ABC}$. For more details see, for example, \cite{Bhargava:2004,Krutelevich:2004,Bellucci:2007zi}.

As already noted in \autoref{sec:d5special}, because $N(A)=A^3, A^\sharp=A^2$, all non-zero elements  $A\in\J_\R$ are rank 3. Consequently the number of independent ranks in $\FTS_\R$ is reduced to three:

\begin{lemma} If $x\in \FTS_{\R}$ is rank 2, then it is rank 1.
\end{lemma}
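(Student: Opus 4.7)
The plan is to reduce to the normal form $\begin{pmatrix}1 & A\\ 0 & \beta\end{pmatrix}$ and then exploit the fact that the quadratic adjoint on $\J_\R$ is just squaring on $\R$, so the rank-$2$ conditions collapse.

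First, I would invoke the reduction lemma preceding \autoref{lem:rank3}, which explicitly covers $\J_\R$, to bring an arbitrary $x\in\FTS_\R$ into the form $x_{\text{red}}=\begin{pmatrix}1 & A\\ 0 & \beta\end{pmatrix}$ with $A\in\J_\R=\R$ and $\beta\in\R$, via some $\sigma\in\Aut(\FTS_\R)$. Since rank is $\Aut(\FTS)$-invariant (being defined in terms of $T$, $\Delta$, $\Upsilon$, all of which are preserved by $\sigma$), it suffices to analyse $x_{\text{red}}$.

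Next I would apply the reduced rank conditions listed in the preceding remark. An element in reduced form has rank $2$ iff $A^\sharp=0$, $\beta=0$, and $A\neq 0$. At this point the special structure of $\J_\R$ kicks in: as noted in Case~3 of \autoref{sec:d5special}, the quadratic adjoint is $A^\sharp=A^2$. Because $A\in\R$, the equation $A^2=0$ has the unique solution $A=0$, which directly contradicts the clause $A\neq 0$ of the rank-$2$ condition.

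Therefore no $x\in\FTS_\R$ satisfies the rank-$2$ conditions; the implication ``rank $2$ $\Rightarrow$ rank $1$'' then holds vacuously (and one may equivalently say that dropping the clause $A\neq 0$ collapses the rank-$2$ condition to the rank-$1$ condition $A=0,\beta=0$). There is no real obstacle here: the only thing one needs to double-check is that the Krutelevich-type reduction indeed extends to $\J_\R$ (which was the content of the preceding lemma) and that $A^\sharp=A^2$ in this one-dimensional case, after which the argument is a one-line computation in $\R$.
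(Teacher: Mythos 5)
Your proposal is correct and matches the paper's own argument: both reduce to the form $\begin{pmatrix}1&A\\0&\beta\end{pmatrix}$, apply the reduced rank conditions, and use the fact that $A^\sharp=A^2$ on $\J_\R$ forces $A=0$, so the rank-$\leq 2$ condition collapses to the rank-$1$ one. Your added remark about the vacuous reading of the implication is just a more explicit phrasing of the paper's use of ``Rank $x_{\text{red}}\leq 2$''.
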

\begin{proof}
Consider the independent  rank 2 conditions evaluated on the reduced
form of \eqref{eq:redform}: \be \textrm{Rank}\; x_{\text{red}}
\leq2\Leftrightarrow A^\sharp=0,\;\beta=0. \ee Since
$A^\sharp=0\Rightarrow A=0$ for all $A\in \J_\R$, one obtains \be
\textrm{Rank}\; x_{\text{red}} \leq2\Rightarrow
A=0,\;\beta=0\Rightarrow \textrm{Rank}\; x_{\text{red}} =1. \ee
\qed\end{proof}
Hence, the rank 2 orbits of  \autoref{thm:redcanforms} do not exist
for $\FTS_\R$. There are only elements of rank 1, 3 or 4, and we
have the following
\begin{theorem}\label{thm:T4} Every element $x\in \FTS_{\R}$ of a given rank is $\SL(2, \R)$ related to one of the following canonical forms:
\begin{enumerate}
\item Rank 1
\begin{enumerate}
\item  $x_{1}=\begin{pmatrix}1&0\\0&0\end{pmatrix}$
\end{enumerate}
\item Rank 3
\begin{enumerate}
\item  $x_{3}=\begin{pmatrix}0&1\\0&0\end{pmatrix}$
\end{enumerate}
\item Rank 4
\begin{enumerate}
\item   $x_{4a}=k\begin{pmatrix}1& -1\\0&0\end{pmatrix}$
\item   $x_{4b}=k\begin{pmatrix}1& 1\\0&0\end{pmatrix},$
\end{enumerate}
where $k>0$.
\end{enumerate}
\end{theorem}
\begin{proof}
We begin by transforming to the generic canonical form  \eqref{eq:redform}
 and proceed, case by case, according to the rank.
\paragraph{Rank 1:}
$ \text{Rank} x=1\Rightarrow A=0,\beta=0, $ so that every rank 1
element is $\Aut(\FTS_{\R})$ related to \be
x_1=\begin{pmatrix}1&0\\0&0\end{pmatrix}. \ee
\paragraph{Rank 3:}
$\text{Rank} x=3\Rightarrow 4N(A)=4A^3=-\beta^2$ and $A\not=0$, so
that every rank 3 element is $\Aut(\FTS_{\R})$ related to a reduced
form \be\label{eq:r3half}
\begin{pmatrix}1&A\\0&\sqrt{-A^3}\end{pmatrix},
\ee with $A<0$. In order to determine the $\Aut(\FTS_{\R})$
transformation bringing \eqref{eq:r3half} into the desired form, it
is convenient to use the totally symmetric hypermatrix
representation of the charges: \be x=a_{(A_1A_2A_3)}, \quad A_1,
A_2, A_3=0,1 \ee where, explicitly, \be
\begin{split}
a_{(000)}=\alpha, &\quad a_{(110)}=a_{(101)}=a_{(011)}=A;\\
a_{(000)}=\beta, &\quad a_{(001)}=a_{(010)}=a_{(100)}=B.\\
\end{split}
\ee A generic $\Aut(\FTS_{\R})$ transformation is then given by an
$\SL(2, \R)$ matrix \be M=\begin{pmatrix}a&b\\c&d\end{pmatrix},
\quad ad-bc=1 \ee under which, \be a_{(ABC)}\mapsto
M_{A}{}^{A'}M_{B}{}^{B'}M_{C}{}^{C'}a_{(A'B'C')}=\tilde{a}_{(ABC)}.
\ee Applying $M$ to the reduced form \eqref{eq:r3half}, we see that
in order to obtain $x_3$ we are required to solve the follow system
of polynomial equations

\begin{align}
\tilde{a}_{(000)}&=a^3-3|A|b^2 a+2|A|^{\frac{3}{2}}b^3=0;\label{eq:r3a}\\
\tilde{a}_{(110)}&=c^2a-|A|(ad^2 +2bcd)+2|A|^{\frac{3}{2}}d^2b=\tilde{A};\label{eq:r3b}\\
\tilde{a}_{(001)}&=a^2c-|A|(cb^2 +2dab)+2|A|^{\frac{3}{2}}b^2d=0;\label{eq:r3c}\\
\tilde{a}_{(111)}&=c^3-3|A|d^2 c+2|A|^{\frac{3}{2}}d^3=0,\label{eq:r3d}
\end{align}
where we leave $\tilde{A}\not=0$ arbitrary as it may be subsequently
scaled away using \be
M=\begin{pmatrix}\tilde{A}&0\\0&\tilde{A}^{-1}\end{pmatrix}. \ee
Setting $d=1$, one immediately sees that \eqref{eq:r3d} has two
distinct real roots: \be
(c+2|A|^{\frac{1}{2}})(c-|A|^{\frac{1}{2}})^2=0. \ee The double root
$c=|A|^{\frac{1}{2}}$ contradicts $\tilde{A}\not=0$ so we choose
$c=-2|A|^{\frac{1}{2}}$, which implies $a+2|A|^{\frac{1}{2}}b=1$
and, from \eqref{eq:r3b}: \be \tilde{A}=3|A|. \ee Substituting $d=1,
c=-2|A|^{\frac{1}{2}}$ into \eqref{eq:r3c} and solving for $a$ we
find, \be a_{\pm}=\frac{|A|^{\frac{1}{2}} b}{2}(-1\pm 3). \ee
Letting $a=a_+$ we determine that \be
M=\begin{pmatrix}\frac{1}{3}&\frac{1}{3|A|^{\frac{1}{2}}}\\-2|A|^{\frac{1}{2}}&1\end{pmatrix}
\ee sends \eqref{eq:r3half} to $
\bigl(\begin{smallmatrix}0&3|A|\\0&0\end{smallmatrix} \bigr), $
which is related by $
M=\bigl(\begin{smallmatrix}3|A|&0\\0&(3|A|)^{-1}\end{smallmatrix}
\bigr) $
 to
$
\bigl(\begin{smallmatrix}0&1\\0&0\end{smallmatrix} \bigr),
$
as required.
\paragraph{Rank 4:}
$\text{Rank} x=4\Rightarrow 4A^3+\beta^2\not=0$. First, we show that
starting from \eqref{eq:genred} every rank 4 $x$ may be brought into
a form with $B=\beta=0$, namely: \be\label{eq:aA}
\begin{pmatrix}\tilde{\alpha}&\tilde{A}\\0&0\end{pmatrix}.
\ee This amounts to solving
\begin{align}
\tilde{a}_{(001)}&=a^2c+A(cb^2 +2dab)+\beta b^2d=0;\label{eq:r4c}\\
\tilde{a}_{(111)}&=c^3+3Ad^2 c+\beta d^3=0,\label{eq:r4d}
\end{align}
where $ad-bc=1$. There are three subcases: (i) $A\not=0$, $\beta=0$,
(ii) $A=0$, $\beta\not=0$, (iii) $A\not=0$, $\beta\not=0$. (i) is
trivial. For (ii), our system simplifies down to
\begin{align}
\tilde{a}_{(001)}&=a^2c+\beta b^2d=0;\label{eq:r4c2}\\
\tilde{a}_{(111)}&=c^3+\beta d^3=0,\label{eq:r4d2}
\end{align}
Setting $d=1$ and $c=-\beta^{\frac{1}{3}}$ solves  \eqref{eq:r4d2}.
By substituting this choice into \eqref{eq:r4c2} and solving for
$a$, one finds $a_{\pm}=\pm\beta^{\frac{1}{3}}b$. But only  $a_{+}$
is consistent with $ad-bc=1$. Making this choice implies
$b=(8\beta)^{-\frac{1}{3}}$, and one obtains the $\SL(2, \R)$ matrix
 \be
 M=\begin{pmatrix}\frac{1}{2}&-\beta^{\frac{1}{3}}\\(8\beta)^{-\frac{1}{3}}&1\end{pmatrix}.
 \ee

Finally, let us consider the case (iii) $A\not=0$, $\beta\not=0$. Let $c=\gamma d$, where
$\gamma=\gamma(\beta, A)$. Then, from \eqref{eq:r4d}, we have \be
d^3(\gamma^3+3A\gamma+\beta)=0, \ee which, since $d$ is necessarily
non-zero, implies \be\label{eq:cubiceq}
f(\gamma)=\gamma^3+3A\gamma+\beta=0. \ee There is at least one real
root $\gamma_*$ that is non-zero for $\beta\not=0$. Substituting
into \eqref{eq:r4c} yields, \be \gamma_*
d\left[a^2+\frac{2Ab}{\gamma_*}a+(A+\frac{\beta}{\gamma_*})b^2\right]=0.
\ee Solving for $a$ we find \be
a_{\pm}=\frac{Ab}{\gamma_*}\left(-1\pm\sqrt{1-\frac{\gamma_{*}^{2}}{A^2}(A+\frac{\beta}{\gamma_*})}\right)=\xi_\pm(\beta,
A)b. \ee Hence, we require \be\label{eq:areq}
A^2-\gamma_{*}^{2}A-\gamma_*\beta=A^2-y(\gamma_*)\geq 0, \ee where $y(\gamma_*)=\gamma_{*}^{2}A+\gamma_*\beta$. We may always choose
the root $\gamma_*$ such that this condition holds. In order to see
this, let us consider the two subcases: $(a)$ $A<0$ and $(b)$ $A>0$.
$(a)$ For $A<0$, $f(\gamma)$ in \eqref{eq:cubiceq}
 has two turning points at $\pm\sqrt{|A|}$. Consequently, for $\beta>0$ there is always a real root $\gamma_*< -\sqrt{|A|}<0$, which implies \eqref{eq:areq}. Similarly, for $\beta<0$ there is always a real root $\gamma_*> \sqrt{|A|}>0$, which again implies \eqref{eq:areq}.  $(b)$ For $A>0$ the cubic $f(\gamma)$ only has  an inflection point at $\gamma=0, f(0)=\beta$ and so $f(\gamma)=0$ has a single real root $\gamma_*$. If $\beta<0$, then $0<\gamma_*<\frac{|\beta|}{3A}$. Since $y(\gamma_*)$ has a minimum at $\frac{-\beta}{2A}$ such that $y(\frac{-\beta}{2A})=\frac{-\beta^2}{4A}<0$, it is clear that $0<\gamma_*<\frac{|\beta|}{3A}$ implies $y(\gamma_*)<0$ and so condition \eqref{eq:areq} is satisfied. Similarly, if $\beta>0$, then
$-\frac{\beta}{3A}<\gamma_*<0$, and once again $y(\gamma_*)<0$, implying condition \eqref{eq:areq} as required.
Hence, we conclude condition \eqref{eq:areq} may always be satisfied.

In summary: \be a=\xi_\pm(\beta, A)b,\quad c=\gamma_*(\beta, A)d \ee
which yields \be (\xi_\pm-\gamma_*)bd=1, \ee where $\xi_+$ or
$\xi_-$ is chosen such that $\xi_\pm-\gamma_*$ is
non-zero\footnote{Note that $\xi_\pm=\gamma_*$ for both choices
implies $\gamma_*=\pm\sqrt{-A}$ which, from \eqref{eq:cubiceq},
implies $4A^3+\beta^2=0$, in turn contradicting our rank 4
assumption.}. Without loss of generality we can set $d=1$, so
that $b=1/(\xi_\pm-\gamma_*)$ and the $\SL(2, \R)$ matrix
 \be
 M=\begin{pmatrix}\xi_\pm/(\xi_\pm-\gamma_*)&1/(\xi_\pm-\gamma_*)\\\gamma_*&1\end{pmatrix},
 \ee
transforms our hypermatrix into the desired form \eqref{eq:aA}.

Finally, the reduced form \eqref{eq:aA} may be brought into the form
$x_{4a/b}$ of the theorem by the diagonal $\SL(2, \R)$
transformation
 \be
 M=\begin{pmatrix}\lambda &0\\0&\lambda^{-1}\end{pmatrix}:\begin{pmatrix}\tilde{\alpha}&\tilde{A}\\0&0\end{pmatrix}\mapsto k\begin{pmatrix}1&\epsilon\\0&0\end{pmatrix}.
 \ee
 where $\epsilon=+1,-1$ according as $\Delta>0, \Delta<0$.
\qed\end{proof}

\section*{Acknowledgments}

We would like to thank Duminda Dahanayake for useful discussions. The work of LB and SF is supported by the ERC Advanced Grant no. 226455 \textit{%
SUPERFIELDS}. Furthermore, the work of SF is also supported in
part by DOE Grant DE-FG03-91ER40662. The work of MJD is
supported by the STFC under rolling grant ST/G000743/1. LB is grateful for hospitality at the Theoretical Physics group at Imperial College London  and the CERN theory division  (where he was supported by the
above ERC Advanced Grant).

%\bibliography{Ref_Library}
%\bibliographystyle{utphys}

\providecommand{\href}[2]{#2}\begingroup\raggedright\endgroup

\end{document}